\numberwithin{equation}{section}
\newtheorem{theorem}[equation]{Theorem}
\newtheorem*{theorem*}{Theorem}
\newtheorem{lemma}[equation]{Lemma}
\newtheorem*{conjecture*}{Mamma Conjecture}
\newtheorem*{conjecture1*}{Mamma Conjecture (revisited)}
\newtheorem{proposition}[equation]{Proposition}
\newtheorem{corollary}[equation]{Corollary}
\newtheorem*{corollary*}{Corollary}
\theoremstyle{remark}
\newtheorem{definition}[equation]{Definition}
\theoremstyle{remark}
\newcommand{\cA}{{\mathcal A}}
\newcommand{\cB}{{\mathcal B}}
\newcommand{\cC}{{\mathcal C}}
\newcommand{\cD}{{\mathcal D}}
\newcommand{\cJ}{{\mathcal J}}
\newcommand{\cN}{{\mathcal N}}
\newcommand{\cO}{{\mathcal O}}
\newcommand{\cV}{{\mathcal V}}
\newcommand{\bbA}{\mathbb{A}}
\newcommand{\bbL}{\mathbb{L}}
\newcommand{\bbQ}{\mathbb{Q}}
\newcommand{\bbZ}{\mathbb{Z}}
\DeclareMathOperator{\SmProj}{SmProj} 
\DeclareMathOperator{\id}{id}
\DeclareMathOperator{\NChow}{NChow} 
\DeclareMathOperator{\NNum}{NNum} 
\DeclareMathOperator{\Chow}{Chow} 
\DeclareMathOperator{\Num}{Num} 
\newcommand{\dgcat}{\mathsf{dgcat}}
\newcommand{\perf}{\mathsf{perf}}
\newcommand{\dg}{\mathsf{dg}}
\newcommand{\Hom}{\mathsf{Hom}}
\newcommand{\End}{\mathsf{End}}
\newcommand{\rep}{\mathsf{rep}}
\newcommand{\Hmo}{\mathsf{Hmo}}
\newcommand{\op}{\mathsf{op}}
\newcommand{\too}{\longrightarrow}
\newcommand{\ie}{\textsl{i.e.}\ }
\begin{document}

\title[Noncommutative motives, numerical equivalence, and semi-simplicity]{Noncommutative motives, \\numerical equivalence,\\ and semi-simplicity}
\author{Matilde Marcolli and Gon{\c c}alo~Tabuada}

\address{Matilde Marcolli, Mathematics Department, Mail Code 253-37, Caltech, 1200 E.~California Blvd. Pasadena, CA 91125, USA}
\email{matilde@caltech.edu}

\address{Gon\c calo Tabuada, Departamento de Matematica, FCT-UNL, Quinta da Torre, 2829-516 Caparica,~Portugal }
\email{tabuada@fct.unl.pt}

\subjclass[2000]{19A49,19D55,19E15}
\date{\today}

\keywords{Chow and numerical motives, noncommutative motives}

\thanks{The first named author was partially supported by the NSF grants 
DMS-0651925, DMS-0901221 and DMS-1007207. The second named author was partially supported by the FCT-Portugal grant {\tt PTDC/MAT/098317/2008}.}

\begin{abstract}
In this article we further the study of the relationship between pure motives and noncommutative motives, initiated in \cite{CvsNC}. Making use of Hochschild homology, we introduce the category $\NNum(k)_F$ of noncommutative {\em numerical} motives (over a base ring $k$ and with coefficients in a field $F$). We prove that $\NNum(k)_F$ is abelian semi-simple and that Grothendieck's category $\Num(k)_\bbQ$ of numerical motives embeds in $\NNum(k)_\bbQ$ after being factored out by the action of the Tate object. As an application we obtain an alternative proof of Jannsen's semi-simplicity result, which uses the noncommutative world instead of a Weil cohomology.
\end{abstract}

\maketitle
\vskip-\baselineskip
\vskip-\baselineskip
\vskip-\baselineskip

\section{Introduction}
During the last two decades Bondal, Drinfeld, Kaledin, Kapranov, Kontsevich, Van den Bergh, and others, have been promoting a broad noncommutative (algebraic) geometry program; see \cite{Kapranov,BB,Drinfeld,Chitalk,Kaledin,IAS,ENS,Miami,finMot}. This beautiful program, where geometry is performed directly on dg categories (see \S\ref{sec:dg}), encompasses several research fields such as algebraic geometry, representation theory of quivers, symplectic geometry, and even mathematical physics. In analogy with the commutative world, a central problem is the development of an associated theory of {\em noncommutative motives}.
\subsection*{Noncommutative Chow motives}
Let $k$ be a base commutative ring and $F$ a field of coefficients. The category $\NChow(k)_F$ of {\em noncommutative Chow motives} (over $k$ and with coefficients in $F$) was constructed in \cite{IMRN}. It is defined as the pseudo-abelian envelope (see \S\ref{sub:envelope}) of the category whose objects are the smooth and proper dg categories in the sense of Kontsevich (see Definition~\ref{def:sp}), whose morphisms from $\cA$ to $\cB$ are given by the $F$-linearized Grothendieck group $K_0(\cA^\op \otimes_k^\bbL \cB)_F$, and whose composition operation is induced by the (derived) tensor product of bimodules. In analogy with the commutative world, the elements of $K_0(\cA^\op \otimes_k^\bbL \cB)_F$ will be called {\em correspondences}. The category $\NChow(k)_F$ provides a natural framework for the study of several invariants such as cyclic homology (and its variants), algebraic $K$-theory, and even topological Hochschild homology. Among many important applications, $\NChow(k)_F$ allowed a streamlined construction of the Chern characters, a unified and conceptual proof of the fundamental theorem, and even a description of the fundamental isomorphism conjecture in terms of the classical Farrell-Jones isomorphism conjecture; see \cite{BT,CT1, Chern, Duke,Fund}. 

In the particular case where $k$ is a field and $F=\bbQ$, the precise relationship between $\NChow(k)_\bbQ$ and the classical category $\Chow(k)_\bbQ$ of Chow motives was established in \cite{CvsNC}. Recall that $\Chow(k)_\bbQ$ is symmetric monoidal and that it carries an important $\otimes$-invertible object, the Tate motive $\bbQ(1)$. Hence, we can consider the associated orbit category $\Chow(k)_\bbQ\!/_{\!\!-\otimes \bbQ(1)}$; see \S\ref{sub:orbit}. As proved in {\em loc.~cit.}, there is a fully-faithful, $\bbQ$-linear, additive, and symmetric monoidal functor $R$ making the following diagram commutative (up to natural isomorphism)
\begin{equation}\label{diag:1}
\xymatrix@C=2em@R=1.5em{
\SmProj(k)^\op \ar[d]_M \ar@/^2pc/[ddr]^-{NM} & \\
\Chow(k)_\bbQ \ar[d]_\pi & \\
\Chow(k)_\bbQ\!/_{\!\!-\otimes \bbQ(1)} \ar[r]_-R & \NChow(k)_\bbQ\,,
}
\end{equation}
where $NM$ denotes the functor which sends a smooth projective $k$-variety  to its dg category of perfect complexes of $\cO_X$-modules (see \S\ref{sub:smooth}). Intuitively speaking, the above diagram formalizes the conceptual idea that the commutative world can be embedded into the noncommutative world after identifying all the Tate twists.
\subsection*{Motivating questions}
In order to formalize and solve `counting problems', such as counting the number of common points to two planar curves in general position, the category of Chow motives is not appropriate as it makes use of a very refined notion of equivalence. Motivated by these `counting problems', Grothendieck developed in the sixties the category $\Num(k)_F$ of numerical motives; see \cite{GS}. Among many extraordinary properties, Grothendieck conjectured that $\Num(k)_F$ was abelian semi-simple, a result proved thirty years later by Jannsen~\cite{Jannsen}. This circle of conjectures and results lead us to the following important questions: 

\medbreak

{\em Question A:} \textit{Does Grothendieck's category of numerical motives admit a noncommutative analogue ?} 

{\em Question B:} \textit{Does Jannsen's semi-simplicity result hold also in the noncommutative world ?}

{\em Question C:} \textit{Do numerical motives relate to noncommutative numerical motives in the same way as Chow motives relate to noncommutative Chow motives ?}

\medbreak

The purpose of this article is to provide simple and precise answers to these questions. The common answer can be stated in simple terms as: {\em ``Yes, and in the noncommutative world `counting' is expressed in terms of Hochschild homology''}.
\subsection*{Noncommutative numerical motives}

Let $(\cA,e)$ and $(\cB,e')$ be two noncommutative Chow motives and $\underline{X}= (e \circ [\Sigma_i a_i X_i ] \circ e')$ and  $\underline{Y}= (e' \circ [\Sigma_j b_j Y_j ] \circ e)$ two correspondences. Recall that $X_i$ and $Y_j$ are bimodules (see \S\ref{sub:bimodules}), $e$ and $e'$ are idempotent endomorphisms, $a_i$ and $b_j$ are elements of $F$, and that the sums are indexed by a finite set.
\begin{definition}\label{def:intersection}
The {\em intersection number} of $\underline{X}$ with $\underline{Y}$ is given by the formula
\begin{equation}\label{eq:intersection}
\langle \underline{X} \cdot \underline{Y}\rangle := \sum_{i,j} a_i\!\cdot \!b_j \!\cdot \! [HH(\cA; X_i \otimes^\bbL_\cB Y_j)] \in K_0(k)_F\,,
\end{equation}
where $[HH(\cA,X_i \otimes^\bbL_\cB Y_j)]$ denotes the class in $K_0(k)_F$ of the Hochschild homology complex of $\cA$ with coefficients in the $\cA\text{-}\cA$-bimodule $X_i \otimes^\bbL_\cB Y_j$.
\end{definition}
The fact that the above intersection number is well-defined (\ie that it does not depend on the choice of the representatives of $\underline{X}$ and $\underline{Y}$) is explained in the proof of Theorem~\ref{thm:ideal}. Making use of \eqref{eq:intersection}, we can then introduce the following notion of numerical equivalence in the noncommutative world. 
\begin{definition}
A correspondence $\underline{X}$ is {\em numerically equivalent to zero} if for every correspondence $\underline{Y}$ the intersection number $\langle \underline{X} \cdot \underline{Y} \rangle$ is zero.
\end{definition}
\begin{theorem}\label{thm:ideal}
The correspondences which are numerically equivalent to zero form a $\otimes$-ideal of the category $\NChow(k)_F$; which we will denote by $\cN$. 
\end{theorem}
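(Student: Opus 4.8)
The plan is to split the statement into three parts --- well-definedness of the intersection number, stability of $\cN$ under addition and composition, and stability under $\otimes$ --- and to reduce the last two to the formal behaviour of a two-sided ideal cut out by a trace form on a rigid symmetric monoidal category, the trace being incarnated by Hochschild homology. For well-definedness, I would first note that, $\cA$ and $\cB$ being smooth and proper, every bimodule occurring in a correspondence is perfect, $X_i\otimes^\bbL_\cB Y_j$ is a perfect $\cA$-$\cA$-bimodule, and $HH(\cA;-)$ takes perfect $\cA$-$\cA$-bimodules to perfect complexes of $k$-modules; hence each term $[HH(\cA;X_i\otimes^\bbL_\cB Y_j)]$ genuinely lies in $K_0(k)_F$. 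Since both $-\otimes^\bbL_\cB-$ and $HH(\cA;-)$ send distinguished triangles to distinguished triangles in each variable, the assignment $([X],[Y])\mapsto[HH(\cA;X\otimes^\bbL_\cB Y)]$ descends to an $F$-bilinear pairing $K_0(\cA^\op\otimes^\bbL\cB)_F\times K_0(\cB^\op\otimes^\bbL\cA)_F\to K_0(k)_F$; unwinding the definition of composition in $\NChow(k)_F$ and absorbing the idempotents $e,e'$, the value of this pairing on (representatives of) $\underline X,\underline Y$ recovers formula~\eqref{eq:intersection} and equals the class $[HH(\cA;\,\underline Y\circ\underline X)]$ of the Hochschild complex of the composite $\underline Y\circ\underline X\in\End_{\NChow(k)_F}((\cA,e))$. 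In particular it depends only on the correspondences $\underline X$ and $\underline Y$, not on their representatives.

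Next I would combine this identification with the cyclic invariance $HH(\cA;X\otimes^\bbL_\cB Y)\simeq HH(\cB;Y\otimes^\bbL_\cA X)$ of Hochschild homology to recognise $\underline f\mapsto[HH(\cA;\underline f)]$ as the categorical trace $\mathrm{tr}$ of the rigid symmetric monoidal category $\NChow(k)_F$ (rigidity being available from \cite{IMRN}), so that $\langle\underline X\cdot\underline Y\rangle=\mathrm{tr}(\underline Y\circ\underline X)=\mathrm{tr}(\underline X\circ\underline Y)$. From here $F$-bilinearity of the pairing shows that $\cN((\cA,e),(\cB,e'))$ is an $F$-submodule of $\Hom_{\NChow(k)_F}((\cA,e),(\cB,e'))$, and the trace identities $\mathrm{tr}(h\circ h')=\mathrm{tr}(h'\circ h)$ give, for $\underline X\in\cN$ and arbitrary composable $f$ and $g$, the vanishings $\langle(f\circ\underline X)\cdot\underline Y\rangle=\mathrm{tr}(\underline Y\circ f\circ\underline X)=\langle\underline X\cdot(\underline Y\circ f)\rangle=0$ and $\langle(\underline X\circ g)\cdot\underline Y\rangle=\mathrm{tr}(g\circ\underline Y\circ\underline X)=\langle\underline X\cdot(g\circ\underline Y)\rangle=0$. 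Thus $\cN$ is a two-sided ideal of $\NChow(k)_F$.

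For stability under $\otimes$ it suffices to prove $\underline X\otimes\id_{(\cC,e'')}\in\cN$ whenever $\underline X\in\cN$ and $(\cC,e'')$ is arbitrary, since $\underline X\otimes g=(\id\otimes g)\circ(\underline X\otimes\id)$ then belongs to $\cN$ by the composition-ideal property just established. Given a correspondence $\underline Z\colon(\cB,e')\otimes(\cC,e'')\to(\cA,e)\otimes(\cC,e'')$, I would use the dualizability of $(\cC,e'')$ to form the partial trace $\underline Y:=\mathrm{tr}_{(\cC,e'')}(\underline Z)\colon(\cB,e')\to(\cA,e)$; the standard facts that the full trace factors as the full trace of the partial trace and that the partial trace is natural in its source variable then give
\[
\langle(\underline X\otimes\id_{(\cC,e'')})\cdot\underline Z\rangle=\mathrm{tr}\bigl(\underline Z\circ(\underline X\otimes\id_{(\cC,e'')})\bigr)=\mathrm{tr}\bigl(\mathrm{tr}_{(\cC,e'')}(\underline Z)\circ\underline X\bigr)=\langle\underline X\cdot\underline Y\rangle=0 ,
\]
so $\underline X\otimes\id_{(\cC,e'')}$ is numerically equivalent to zero, and $\cN$ is a $\otimes$-ideal.

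I expect the main obstacle to be the identification, in the second step, of the class $[HH(\cA;-)]$ with the categorical trace of $\NChow(k)_F$: once this ``Hochschild homology $=$ trace'' principle is in place (alongside the rigidity of $\NChow(k)_F$), everything else is formal manipulation of a trace form in a rigid category. Concretely, this identification, as well as the partial-trace computation in the third step, rests on the cyclic invariance of Hochschild homology together with the Künneth/Fubini formula $HH(\cA\otimes_k\cC;-)\simeq HH(\cA;HH_{\cC}(-))$.
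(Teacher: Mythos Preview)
Your proposal is correct and follows the same underlying strategy as the paper: identify the intersection number $\langle\underline X\cdot\underline Y\rangle$ with the categorical trace $\mathrm{tr}(\underline Y\circ\underline X)$ in the rigid symmetric monoidal category $\NChow(k)_F$, and then observe that the morphisms annihilated by $\mathrm{tr}(g\circ -)$ for all $g$ form a $\otimes$-ideal. The differences are in the execution of each of these two halves.

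For the formal half, the paper simply invokes \cite[Lemma~7.1.1]{AK}: in any $F$-linear additive rigid symmetric monoidal category the subsets \eqref{eq:description} constitute a $\otimes$-ideal. You instead reprove this lemma by hand, using cyclicity of the trace for the two-sided ideal property and a partial-trace/Fubini argument for $\otimes$-stability. Your argument is correct and self-contained; the paper's is shorter but relies on an external reference.

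For the ``Hochschild $=$ trace'' identification, the paper takes a more direct route than the one you sketch: rather than appealing to cyclic invariance of $HH$, it computes the categorical trace explicitly by recalling that the dual of $\cA$ is $\cA^\op$ with evaluation map given by the diagonal bimodule $\cA(-,-)$, so that the trace of $[W]$ is the class of $W\otimes^\bbL_{\cA^\op\otimes^\bbL_k\cA}\cA(-,-)\simeq HH(\cA;W)$ in $K_0(k)_F$. This is cleaner than deducing the trace property from cyclic invariance plus K\"unneth, and in particular avoids the auxiliary ``Fubini'' input you mention at the end. You correctly flagged this identification as the crux; the paper isolates it as a separate Proposition and proves it by unwinding the evaluation map rather than by the route you propose.
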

Our candidate solution to the above Question A is then the following:
\begin{definition}
The {\em category of noncommutative numerical motives}, denoted by $\NNum(k)_F$, is the pseudo-abelian envelope of the quotient category $\NChow(k)_F/\cN$.
\end{definition}
Under general hypothesis on the base ring $k$, the ideal $\cN$ (and so the category $\NNum(k)_F$) admits the following conceptual characterization.
\begin{proposition}\label{prop:ideal1}
Assume that $k$ is a local ring (or more generally that $K_0(k)=\bbZ$). Then, the ideal $\cN$ is the largest $\otimes$-ideal of $\NChow(k)_F$ (distinct from the entire category). Moreover, the intersection number~\eqref{eq:intersection} corresponds~to
\begin{equation}\label{eq:intersection1}
\langle \underline{X} \cdot \underline{Y}\rangle = \sum_{i,j,n} (-1)^n\, a_i\!\cdot \!b_j \!\cdot \!\mathrm{rk} HH_n(\cA; X_i \otimes^\bbL_\cB Y_j) \in F\,,
\end{equation}
where $\mathrm{rk} HH_n(\cA; X_i \otimes^\bbL_\cB Y_j)$ denotes the rank of the $n^\mathrm{th}$ Hochschild homology group. 
\end{proposition}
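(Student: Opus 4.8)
The plan is to handle the two assertions in turn, the bridge between them being that the intersection number \eqref{eq:intersection} is, up to the identification $\mathrm{End}_{\NChow(k)_F}(\mathbf 1)=K_0(k)_F$, the categorical trace in the rigid symmetric monoidal category $\NChow(k)_F$; here $\mathbf 1=(\uk,\mathrm{id})$ denotes the $\otimes$-unit. We use that smooth proper dg categories are dualizable, so $\NChow(k)_F$ is indeed rigid, and that for $\cA$ smooth the Hochschild homology complex $HH(\cA;E)$ of a perfect $\cA$-$\cA$-bimodule $E$ is a perfect complex of $k$-modules, so that its class lies in $K_0(\perf(k))=K_0(k)$ and, after extension of scalars, in $K_0(k)_F$ (this is already implicit in Definition~\ref{def:intersection}).

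First I would establish formula \eqref{eq:intersection1}. When $K_0(k)=\bbZ$ one has $K_0(k)_F=\bbZ\otimes_\bbZ F=F$ and the isomorphism $K_0(k)\cong\bbZ$ is the rank. It then suffices to recall the standard fact that the rank of a perfect $k$-complex $P$ equals the Euler characteristic $\sum_n(-1)^n\,\mathrm{rk}\,H_n(P)$ of its homology --- cleanest to see by choosing a bounded representative by finitely generated projectives and applying exactness of localization (so that $\mathrm{rk}\,H_n(P)$ is the generic rank, e.g.\ when $k$ is a domain), which matches the alternating sum of the ranks of the terms with that of the homology. Applying this to $P=HH(\cA;X_i\otimes^\bbL_\cB Y_j)$ and using the $F$-bilinearity of \eqref{eq:intersection} yields \eqref{eq:intersection1}.

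Next comes the trace description. For correspondences $\underline X\colon(\cA,e)\to(\cB,e')$ and $\underline Y\colon(\cB,e')\to(\cA,e)$, the composite $\underline Y\circ\underline X$ is the endomorphism of $(\cA,e)$ corresponding to $\sum_{i,j}a_ib_j\,X_i\otimes^\bbL_\cB Y_j$ (the idempotents being absorbed into the $X_i$ and $Y_j$); a direct computation --- or the known description of the trace in $\NChow(k)_F$ through the Hochschild homology functor, itself an additive invariant --- identifies its categorical trace with $\sum_{i,j}a_ib_j\,[HH(\cA;X_i\otimes^\bbL_\cB Y_j)]=\langle\underline X\cdot\underline Y\rangle$. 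Since the trace is additive and invariant under the structural isomorphisms of $\NChow(k)_F$, this simultaneously shows that \eqref{eq:intersection} does not depend on the chosen representatives of $\underline X$ and $\underline Y$ --- the point deferred from Theorem~\ref{thm:ideal} --- and that $\langle\underline X\cdot\underline Y\rangle=\langle\underline Y\cdot\underline X\rangle$ by cyclicity.

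Finally, maximality of $\cN$. It is a proper ideal, since $\langle\mathrm{id}_{\mathbf 1}\cdot\mathrm{id}_{\mathbf 1}\rangle=[HH(\uk;\uk)]=[k]=1\neq 0$ in $F$, so $\mathrm{id}_{\mathbf 1}\notin\cN$. Conversely, let $\cI$ be a $\otimes$-ideal with $\cI\not\subseteq\cN$, and pick $\underline X\in\cI$ with $\underline X\notin\cN$, say $\underline X\colon(\cA,e)\to(\cB,e')$; then there is $\underline Y\colon(\cB,e')\to(\cA,e)$ with $\langle\underline X\cdot\underline Y\rangle\neq 0$. Now $h:=\underline Y\circ\underline X\in\cI$ is an endomorphism of $(\cA,e)$ with $\mathrm{tr}(h)=\langle\underline X\cdot\underline Y\rangle$, a nonzero --- hence invertible --- element of the field $F=\mathrm{End}(\mathbf 1)$. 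As $\mathrm{tr}(h)$ factors as $\mathrm{ev}\circ(h\otimes\mathrm{id})\circ\mathrm{coev}$ on $(\cA,e)\otimes(\cA,e)^\vee$ with $h\otimes\mathrm{id}\in\cI$, we get $\mathrm{tr}(h)\in\cI$; being an automorphism of $\mathbf 1$, this forces $\mathrm{id}_{\mathbf 1}\in\cI$ and so $\cI=\NChow(k)_F$ (tensor any identity with $\mathrm{id}_{\mathbf 1}$). Hence every proper $\otimes$-ideal is contained in $\cN$. The crux of the whole argument is the noncommutative dictionary --- perfectness of $HH(\cA;-)$ over $k$ and its identification with the categorical trace; granting this, the maximality part is precisely the classical argument of Andr\'e--Kahn and Jannsen for rigid tensor categories whose unit endomorphism ring is a field.
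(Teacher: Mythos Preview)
Your proof is correct and follows essentially the same route as the paper. The only difference is one of packaging: the paper simply observes that $\End_{\NChow(k)_F}(\uk)\simeq K_0(k)_F\simeq F$ and then invokes the general result of Andr\'e--Kahn \cite[Prop.~7.1.4]{AK} that in any $F$-linear rigid symmetric monoidal category with $\End({\bf 1})\simeq F$ the ideal $\cN$ is the largest proper $\otimes$-ideal, whereas you unpack that citation into the direct trace argument (which, as you note yourself, is precisely the Andr\'e--Kahn/Jannsen argument). Your middle paragraph, identifying the intersection number with the categorical trace, is established by the paper separately in the proof of Theorem~\ref{thm:ideal} (via Proposition~4.1 and Corollary~4.2) and is then taken for granted in the proof of the present proposition; including it here is harmless but redundant.
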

\subsection*{Semi-simplicity}
Although enjoying several important properties, the category $\NChow(k)_F$ is not abelian neither semi-simple. However, when we quotient it by the ideal $\cN$ this situation changes radically. Our solution to the above Question B is then the following.
\begin{theorem}\label{thm:semi-simple}
Assume that one of the following two conditions holds:
\begin{itemize}
\item[(i)] The base ring $k$  is local (or more generally we have $K_0(k)=\bbZ$) and $F$ is a $k$-algebra; a large class of examples is given by taking $k=\bbZ$ and $F$ an arbitrary field.
\item[(ii)] The base ring $k$ is a field extension of $F$; a large class of examples is given by taking $F=\bbQ$ and $k$ a field of characteristic zero.
\end{itemize}
Then, the category $\NNum(k)_F$ is abelian semi-simple. Moreover, if $\cJ$ is a $\otimes$-ideal in $\NChow(k)_F$ for which the pseudo-abelian envelope of the quotient category $\NChow(k)_F/\cJ$ is abelian semi-simple, then $\cJ$ agrees with $\cN$.
\end{theorem}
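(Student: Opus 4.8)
The plan is to mirror, in the noncommutative setting, the strategy that Jannsen used (and that André--Kahn later refined) to characterize the ideal of numerical equivalence as the unique ideal producing a semisimple quotient. First I would establish that $\NChow(k)_F$ is an $F$-linear, additive, idempotent-complete (after the envelope is taken) rigid symmetric monoidal category in which, under either hypothesis (i) or (ii), the endomorphism ring of the unit object is just $F$. This is essentially the content of the setup in \S\ref{sub:envelope} together with Proposition~\ref{prop:ideal1} (which computes the intersection pairing with values in $F$); the role of hypotheses (i) and (ii) is precisely to guarantee $\mathrm{End}(\mathbf{1}) = F$ and that Hochschild homology groups are finite-dimensional $F$-vector spaces so that the ranks/traces in~\eqref{eq:intersection1} make sense. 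Once this is in place, the key abstract input is the following standard fact (due to André--Kahn, building on Jannsen): if $\cC$ is such a category and every object has finite-dimensional Hom-spaces, then $\cC$ admits a largest proper $\otimes$-ideal $\cR$, the quotient $\cC/\cR$ is abelian semisimple, and moreover $\cR$ is the \emph{unique} $\otimes$-ideal with the property that the idempotent completion of $\cC/\cR$ is abelian semisimple.

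The second step is to identify this abstractly-defined radical $\cR$ with the ideal $\cN$ of correspondences numerically equivalent to zero. By Theorem~\ref{thm:ideal}, $\cN$ is a $\otimes$-ideal; by Proposition~\ref{prop:ideal1} (in case (i)) $\cN$ is \emph{the} largest proper $\otimes$-ideal, hence $\cN = \cR$ directly, and we are done. In case (ii) I would instead argue that $\cN$ is the kernel of the trace/intersection pairing: for a correspondence $\underline{X}\colon (\cA,e)\to(\cB,e')$ the pairing $\underline{Y}\mapsto \langle \underline{X}\cdot\underline{Y}\rangle$ is, up to the identification of $K_0(k)_F$ with $F$, exactly the categorical trace pairing $\mathrm{Hom}((\cA,e),(\cB,e'))\times \mathrm{Hom}((\cB,e'),(\cA,e))\to \mathrm{End}(\mathbf 1)=F$. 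This is the crucial compatibility: one must check that $[HH(\cA; X\otimes^{\bbL}_{\cB}Y)]$ equals the categorical trace of the composite endomorphism $Y\circ X$ in $\NChow(k)_F$. Granting that, $\cN$ is precisely the ideal $\cN_{\hm}$ of morphisms killed by all traces of composites, and a theorem of André--Kahn says exactly that this ideal is the largest proper $\otimes$-ideal in a rigid $F$-linear tensor category with $\mathrm{End}(\mathbf 1)=F$; hence again $\cN = \cR$ and the semisimplicity and uniqueness follow.

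The main obstacle I expect is precisely this trace computation: showing that the Hochschild-homology intersection number~\eqref{eq:intersection} coincides with the intrinsic categorical trace in the rigid category $\NChow(k)_F$. This requires unwinding the rigid structure on $\NChow(k)_F$ — the dual of $(\cA,e)$ is $(\cA^\op,e^{\mathrm{t}})$, evaluation and coevaluation are given by the diagonal/bimodule $\cA$ viewed appropriately — and then recognizing that the trace of $[Y\otimes^{\bbL}_{\cB}X]\in K_0(\cA^\op\otimes^{\bbL}_k\cA)_F$ is the image of the class under the map induced by tensoring with the $\cA$-$\cA$-bimodule $\cA$ over $\cA^\op\otimes\cA$, which is exactly Hochschild homology $HH(\cA;-)$ by definition. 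The technical care is in the passage to idempotents $e,e'$ and in checking naturality/additivity so that the formula is independent of the chosen representatives — this independence is the assertion already promised after Definition~\ref{def:intersection} and flagged as part of the proof of Theorem~\ref{thm:ideal}, so I would cross-reference it rather than redo it.

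Finally, for the uniqueness clause, once $\cN$ is identified with the André--Kahn radical $\cR$, suppose $\cJ$ is any $\otimes$-ideal with $\NChow(k)_F/\cJ$ pseudo-abelian semisimple. Semisimplicity forces $\cJ$ to contain the radical of every endomorphism ring, and rigidity plus $\mathrm{End}(\mathbf 1)=F$ then forces $\cJ \subseteq \cR = \cN$ (an ideal strictly smaller than the maximal one cannot kill enough morphisms) and $\cJ \supseteq \cN$ (a nilpotent-free quotient must annihilate all trace-zero composites); hence $\cJ = \cN$. I would phrase this last part as a direct invocation of the André--Kahn uniqueness statement to keep the argument short.
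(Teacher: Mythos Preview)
Your outline correctly identifies Andr\'e--Kahn's general machinery as the engine and correctly anticipates that $\End_{\NChow(k)_F}(\underline{k})\simeq F$ under either hypothesis. However, there is a genuine gap in the way you invoke that machinery. You state the ``key abstract input'' as: \emph{if $\cC$ has finite-dimensional Hom-spaces then $\cC/\cR$ is semisimple}. But $\NChow(k)_F$ does \emph{not} have finite-dimensional Hom-spaces in general: its Hom-sets are $F$-linearized Grothendieck groups $K_0(\cA^\op\otimes^\bbL_k\cB)_F$, and already for commutative smooth projective varieties these can be infinite-dimensional over $F$ (think of $K_0$ of a product of surfaces, which surjects onto an infinite-dimensional Chow group). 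So the hypothesis you plan to verify is simply false, and nothing later in your sketch recovers from this.

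The version of Andr\'e--Kahn that actually applies (their Th\'eor\`eme~1 in the erratum) does not ask for finite Homs in $\cC$; it asks for a symmetric monoidal $F$-linear functor $H:\cC\to\cV$ into an $L$-linear rigid category ($L\supseteq F$) where (a) Hom-spaces are finite-dimensional over $L$ and (b) nilpotents have trace zero. The paper's proof supplies exactly this missing functor: it is Hochschild homology $\overline{HH}:\Hmo_0(k)^{\mathsf{sp}}\to\cD_c(k)$, extended to $\NChow(k)_F$ and, in case~(i), composed with the base change $\cD_c(k)\to\cD_c(F)$. You mention Hochschild homology only as the gadget computing intersection numbers; you never promote it to a \emph{tensor functor to a target with finite-dimensional Homs}, and that promotion is the entire content of the proof. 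The trace computation you spend most of the proposal on (that $\langle\underline{X}\cdot\underline{Y}\rangle$ equals the categorical trace) is the content of Theorem~\ref{thm:ideal} and Corollary~\ref{cor:intersection}, not of Theorem~\ref{thm:semi-simple}; it identifies $\cN$ with the trace-kernel ideal but does not by itself yield semisimplicity. Once you add the construction of $\overline{HH}$ as a functor into $\cD_c(F)$ (resp.\ $\cD_c(k)$) and check that this target satisfies (a) and (b), the rest of your argument---including the uniqueness of $\cJ$---goes through exactly as you wrote it.
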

Roughly speaking, Theorem~\ref{thm:semi-simple} shows that the unique way to obtain an abelian semi-simple category out of $\NChow(k)_F$ is through the use of the above `counting formula' \eqref{eq:intersection1}, defined in terms of Hochschild homology.

The most important result in Grothendieck's theory of pure motives is due to Jannsen~\cite{Jannsen}. It asserts that the category $\Num(k)_\bbQ$ of numerical motives is abelian semi-simple. Making use of the composed functor
\begin{equation}\label{eq:composed}
\Chow(k)_\bbQ \stackrel{\pi}{\too} \Chow(k)_\bbQ\!/_{\!\!-\otimes\bbQ(1)} \stackrel{R}{\too} \NChow(k)_\bbQ \too \NNum(k)_\bbQ\,,
\end{equation}
instead of a Weil cohomology (Jannsen used $l$-adic {\'e}tale cohomology), we obtain an alternative proof of Jannsen's result.
\begin{corollary}\label{cor:Jannsen}
Assume that $k$ is a field of characteristic zero. Then, the category $\Num(k)_\bbQ$ is abelian semi-simple.
\end{corollary}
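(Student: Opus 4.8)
The plan is to transport the semi-simplicity of $\NNum(k)_\bbQ$ across the composed functor \eqref{eq:composed}, thereby replacing a Weil cohomology realization by the noncommutative one. Since every field of characteristic zero is a field extension of $F=\bbQ$, hypothesis (ii) of Theorem~\ref{thm:semi-simple} applies and $\NNum(k)_\bbQ$ is abelian semi-simple; in particular $\End_{\NNum(k)_\bbQ}(N)$ is a finite-dimensional semi-simple $\bbQ$-algebra for every object $N$.

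The key step is to show that the composed functor \eqref{eq:composed} annihilates \emph{exactly} the classically numerically trivial correspondences, so that it descends to a fully faithful $\bbQ$-linear functor $\overline{R}\colon\Num(k)_\bbQ\!/_{\!\!-\otimes\bbQ(1)}\too\NNum(k)_\bbQ$. First I would use the To\"en/Bondal--Van den Bergh equivalence $\perf(X)^\op\otimes_k^\bbL\perf(Y)\simeq\perf(X\times_k Y)$ together with the rational Grothendieck--Riemann--Roch isomorphism $K_0(\perf(X\times_k Y))_\bbQ\cong\bigoplus_i\mathrm{CH}^i(X\times_k Y)_\bbQ$ to identify, via the full faithfulness of $R$, the group $\Hom_{\NChow(k)_\bbQ}(NM(X),NM(Y))$ with $\Hom_{\Chow(k)_\bbQ\!/_{\!\!-\otimes\bbQ(1)}}(M(X),M(Y))=\bigoplus_i\mathrm{CH}^i(X\times_k Y)_\bbQ$; in other words, every bimodule between dg categories of the form $\perf(-)$ is, rationally, an algebraic correspondence. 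Because in Definition~\ref{def:intersection} the two endpoints of a correspondence $R(g)$ (here $g$ a cycle class on $X\times_k Y$) already pin down $\cA=\perf(X)$ and $\cB=\perf(Y)$, it follows that the correspondences $\underline Y$ against which $R(g)$ is tested for numerical triviality all arise from algebraic cycles. It then remains to compare, for algebraic correspondences $g$ and $h$, the noncommutative intersection number $\langle R(g)\cdot R(h)\rangle$ of \eqref{eq:intersection1} with the classical intersection pairing of $g$ and $h$: using the Hochschild--Kostant--Rosenberg isomorphism $\HH_n(\perf(X))\cong\bigoplus_{p-q=n}H^q(X,\Omega^p_X)$ (available since $k$ has characteristic zero) and the compatibility of the rational Chern character with Hochschild homology, the two pairings agree up to a nonzero rational scalar, the codimension grading on Chow groups matching the internal grading of $\HH$. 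Hence $g$ is classically numerically trivial if and only if $R(g)\in\cN$, so that $R^{-1}(\cN)$ corresponds to classical numerical equivalence and $\overline R$ is well-defined; its full faithfulness is then formal, from that of $R$ together with the canonical fully faithful embedding $\NChow(k)_\bbQ/\cN\hookrightarrow\NNum(k)_\bbQ$.

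Granting $\overline R$, the rest is formal. Fix an object $X$ of $\Num(k)_\bbQ$. Its endomorphism algebra $\End_{\Num(k)_\bbQ}(X)$ is the degree-zero component of the $\bbZ$-graded $\bbQ$-algebra
\[
\End_{\Num(k)_\bbQ\!/_{\!\!-\otimes\bbQ(1)}}(X)\;=\;\bigoplus_{n\in\bbZ}\Hom_{\Num(k)_\bbQ}(X,X\otimes\bbQ(n)),
\]
which is finite-dimensional (the summands vanish for $|n|\gg0$) and, through the fully faithful $\overline R$, is isomorphic to $\End_{\NNum(k)_\bbQ}(\overline R(X))$, hence semi-simple by the first paragraph. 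Now the degree-zero part of a $\bbZ$-graded finite-dimensional semi-simple algebra over a field of characteristic zero is again semi-simple: after extending scalars to $\overline\bbQ$ the grading becomes a $\bbG_m$-action by algebra automorphisms, connectedness of $\bbG_m$ forces each Wedderburn block to be preserved, and the subalgebra of a matrix algebra fixed by a torus acting through conjugation is a product of matrix algebras; semi-simplicity then descends from $\overline\bbQ$ to $\bbQ$ in characteristic zero. Thus $\End_{\Num(k)_\bbQ}(X)$ is finite-dimensional and semi-simple for every $X$. Since $\Num(k)_\bbQ$ is $\bbQ$-linear, pseudo-abelian, and has finite-dimensional $\Hom$-spaces, the criterion underlying Jannsen's argument --- a pseudo-abelian category over a field all of whose endomorphism algebras are finite-dimensional semi-simple is abelian semi-simple --- shows that $\Num(k)_\bbQ$ is abelian semi-simple.

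The main obstacle will be the comparison in the second paragraph, namely identifying the Hochschild-homology intersection number \eqref{eq:intersection1} with the classical intersection pairing of algebraic cycles: this is exactly the point at which the slogan ``in the noncommutative world counting is Hochschild homology'' must be made rigorous, and it calls, simultaneously, for the equivalence $\perf(X)^\op\otimes_k^\bbL\perf(Y)\simeq\perf(X\times_k Y)$, the rational Grothendieck--Riemann--Roch theorem, and the Hochschild--Kostant--Rosenberg decomposition, together with careful sign bookkeeping and with a precise match between the codimension grading on Chow groups and the internal homological grading of Hochschild homology.
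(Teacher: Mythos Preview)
Your strategy works, but it is a substantially longer route than the paper's, and the step you flag as the ``main obstacle'' is in fact no obstacle at all.

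The paper's proof is a one-line application of the Andr{\'e}--Kahn criterion \cite[Thm.~1]{AK-errata}: one only needs a $\bbQ$-linear symmetric monoidal functor from $\Chow(k)_\bbQ$ to some rigid $L$-linear category with finite-dimensional $\Hom$-spaces in which nilpotents have trace zero. The composed functor~\eqref{eq:composed} lands in $\NNum(k)_\bbQ$, which is abelian semi-simple by Theorem~\ref{thm:semi-simple}(ii) and hence has both properties. That is the whole argument; there is no need to identify the kernel of~\eqref{eq:composed}, no need to descend to a fully faithful $\overline{R}$, and no grading argument on endomorphism algebras.

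Your approach instead reproves (the essential content of) Theorem~\ref{thm:embedding} on the way to Corollary~\ref{cor:Jannsen}, and then transports semi-simplicity back through $\overline{R}$ via the ``degree-zero piece of a graded semi-simple algebra'' lemma. This is correct, but note that the comparison you single out as the hard step---matching the Hochschild intersection number~\eqref{eq:intersection1} with the classical intersection pairing via HKR, GRR, and Toën's product formula---is unnecessary. Since $R$ is a \emph{fully faithful} symmetric monoidal functor between rigid categories and induces an isomorphism $\End(\mathbf{1})\simeq\bbQ\stackrel{\sim}{\to}\bbQ\simeq\End(\mathbf{1})$, it preserves categorical traces and reflects their vanishing; moreover, by fullness every test correspondence $R(Y)\to R(X)$ is already of the form $R(g)$. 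Hence $R^{-1}(\cN)=\cN$ is purely formal, and the elaborate geometric input you list is not needed. What your route buys is an explicit construction of $\overline{R}$ (essentially $R_\cN$); what the paper's route buys is brevity, by never asking what the kernel of~\eqref{eq:composed} is.
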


\subsection*{Relationship with numerical motives}
The relationship between Chow motives and noncommutative Chow motives described in diagram \eqref{diag:1} admits a numerical analogue. Our solution to the above Question C, which emphasizes the correctness of our construction, is the following.
\begin{theorem}\label{thm:embedding}
There exists a fully-faithful, $\bbQ$-linear, additive, and symmetric monoidal functor $R_{\cN}$ making the following diagram commutative (up to natural isomorphism)
$$
\xymatrix@C=2em@R=1.5em{
& \Chow(k)_\bbQ  \ar[d]^\pi \ar[dl] &&  \\
\Num(k)_\bbQ \ar[d]_\pi & \Chow(k)_\bbQ/_{\!\!-\otimes \bbQ(1)}  \ar[dl]  \ar[rr]^-R && \ar[dl] \NChow(k)_\bbQ  \\
\Num(k)_\bbQ/_{\!\!-\otimes \bbQ(1)} \ar[rr]_{R_{\cN}} & & \NNum(k)_\bbQ & \,.
}
$$
\end{theorem}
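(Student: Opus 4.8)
The plan is to show that the symmetric monoidal functor $R$ of diagram~\eqref{diag:1} descends to the numerical quotients, i.e.\ that $R$ carries the ideal of numerically-trivial morphisms in $\Chow(k)_\bbQ/_{\!\!-\otimes\bbQ(1)}$ into $\cN$ and, along its (fully faithful) image, reflects it. Granting this, the induced functor $R_\cN$ is automatically $\bbQ$-linear, additive and symmetric monoidal (these properties being inherited from $R$), the square commutes by construction, and the full-faithfulness of $R_\cN$ follows from that of $R$ together with the identification of the two ideals; a final routine step passes to the pseudo-abelian envelopes (the orbit category of the pseudo-abelian category $\Num(k)_\bbQ$ is again pseudo-abelian, and the inclusion $\NChow(k)_\bbQ/\cN \hookrightarrow \NNum(k)_\bbQ$ is fully faithful). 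Recall that in this setting $k$ is a field, so $K_0(k)_\bbQ=\bbQ$.

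The heart of the matter is the comparison of the two intersection pairings through \emph{categorical traces}. First I would recall that $\NChow(k)_\bbQ$ is rigid symmetric monoidal, each smooth proper dg category $\cA$ being dualizable with dual $\cA^\op$, and that, as extracted from the proof of Theorem~\ref{thm:ideal}, the intersection number $\langle \underline X \cdot \underline Y\rangle$ of $\underline X\colon (\cA,e)\to(\cB,e')$ with $\underline Y\colon (\cB,e')\to(\cA,e)$ is precisely the categorical trace of the endomorphism $\underline Y\circ\underline X$ of $(\cA,e)$, taking values in $\End_{\NChow(k)_\bbQ}(\underline k)=K_0(k)_\bbQ$: by additivity of the trace one reduces to a single bimodule $B=X_i\otimes^\bbL_\cB Y_j$, whose trace is computed by $\cA\otimes^\bbL_{\cA^\op\otimes\cA}B\simeq HH(\cA;B)$, whence the class in $K_0(k)_\bbQ$. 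On the commutative side, in the rigid category $\Chow(k)_\bbQ$ the classical numerical-equivalence pairing of degree-zero correspondences is likewise the categorical trace, so numerical equivalence is exactly the radical of the trace form; passing to the orbit category $\Chow(k)_\bbQ/_{\!\!-\otimes\bbQ(1)}$, whose Hom-groups collect all Tate twists, the trace form becomes the sum over $j\in\bbZ$ of the classical pairings between $\bbQ(j)$-twisted pieces. A short lemma — that quotienting by a $\otimes$-ideal stable under $-\otimes\bbQ(1)$ commutes with formation of the orbit category, applied to the numerical ideal — then identifies the radical of this trace form with $\ker\big(\Chow(k)_\bbQ/_{\!\!-\otimes\bbQ(1)}\to\Num(k)_\bbQ/_{\!\!-\otimes\bbQ(1)}\big)$, and exhibits $\Num(k)_\bbQ/_{\!\!-\otimes\bbQ(1)}$ as the corresponding quotient.

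Now a symmetric monoidal functor between rigid symmetric monoidal categories preserves duals and hence traces; applying this to $R$, for correspondences $\alpha,\beta$ coming from the commutative world one obtains $\langle R(\alpha)\cdot R(\beta)\rangle=\mathrm{tr}\big(R(\beta)\circ R(\alpha)\big)=R\big(\mathrm{tr}(\beta\circ\alpha)\big)$, where $R$ acts on $\End(\mathbf 1)=\bbQ$ as a unital $\bbQ$-algebra homomorphism $\bbQ\to K_0(k)_\bbQ=\bbQ$, i.e.\ as the identity. Thus the commutative (orbit-category) intersection pairing and the noncommutative one \eqref{eq:intersection} literally coincide on morphisms in the image of $R$. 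Consequently $R$ sends numerically-trivial morphisms into $\cN$, producing $R_\cN$ and the commutative square; and since $R$ is fully faithful while $\cN\cap(\text{image of }R)$ equals the numerical ideal, $R_\cN$ is fully faithful on $\Chow(k)_\bbQ/_{\!\!-\otimes\bbQ(1)}$-objects: in particular, if $R(\alpha)\in\cN$ then testing against $\underline Y=R(\beta)$ forces $\langle\alpha\cdot\beta\rangle=0$ for every $\beta$, so $\alpha$ is numerically trivial.

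I expect the two genuinely technical points to be (i) the identification of $\langle\underline X\cdot\underline Y\rangle$ with the categorical trace, which requires some care with the rigid structure of $\NChow(k)_\bbQ$ and with the idempotents $e,e'$, and is the step that really uses that the dg categories are smooth and proper (so that the relevant bimodules are perfect and the trace is defined); and (ii) the commutation of the orbit-category construction with the passage to numerical equivalence — concretely, that a morphism of $\Chow(k)_\bbQ/_{\!\!-\otimes\bbQ(1)}$ is numerically trivial if and only if each of its Tate-twisted components is. Point (i) should be available from the proof of Theorem~\ref{thm:ideal}; point (ii) is a formal but slightly delicate unwinding of the definition of the orbit category.
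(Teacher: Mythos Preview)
Your approach is essentially that of the paper: both recognize that $R$, being symmetric monoidal, preserves categorical traces and hence carries the ideal $\cN$ into $\cN$ (your point (i) is precisely Corollary~\ref{cor:intersection}); both then need the commutation of the orbit construction with the numerical quotient (your point (ii) is exactly the equivalence $\beta$ in Proposition~\ref{prop:commutativity}, which the paper proves by testing against single-component morphisms $\underline{g^j_h}$ together with the maximality of $\cN$ under hypothesis~\eqref{eq:hyp}); and both conclude by handling pseudo-abelian envelopes. The paper simply packages the last two steps into the general Proposition~\ref{prop:commutativity} rather than arguing ad hoc.

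There is, however, one genuine gap in your plan. Your parenthetical assertion that ``the orbit category of the pseudo-abelian category $\Num(k)_\bbQ$ is again pseudo-abelian'' is unjustified, and orbit categories do \emph{not} in general inherit idempotent-completeness. The paper does not claim this. Instead, it passes through the pseudo-abelian envelope $(\Num(k)_\bbQ/_{\!\!-\otimes\bbQ(1)})^\natural$, uses the equivalence $\gamma^\natural$ of Proposition~\ref{prop:commutativity} to identify this with $((\Chow(k)_\bbQ/_{\!\!-\otimes\bbQ(1)})/\cN)^\natural$, applies $(R/\cN)^\natural$ (which is fully faithful because $R/\cN$ is), and finally precomposes with the canonical fully-faithful inclusion $\Num(k)_\bbQ/_{\!\!-\otimes\bbQ(1)} \hookrightarrow (\Num(k)_\bbQ/_{\!\!-\otimes\bbQ(1)})^\natural$ to obtain $R_\cN$. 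This is a routine repair, but it does replace an assertion you have not proved.
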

Intuitively speaking, Theorem~\ref{thm:embedding} formalizes the conceptual idea that Hochschild homology is the correct way to express `counting' in the noncommutative world. 

We believe the above `bridge' between the commutative and noncommutative worlds will open new horizons and opportunities of research by enabling the interchange of results, techniques, ideas, and insights between the two worlds. This is illustrated in the following two corollaries.
\begin{corollary}\label{cor:applic1}
Let $f$ be a morphism (\ie a correspondence) in $\Chow(k)_\bbQ$, resp. in $\Num(k)_\bbQ$, between any two motives. Then, $f$ is an isomorphism if and only if $(R\circ \pi)(f)$, resp. $(R_\cN\circ \pi)(f)$, is an isomorphism.
\end{corollary}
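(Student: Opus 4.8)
The plan is to reduce the statement to a general categorical fact about additive functors into abelian semi-simple categories, combined with the faithfulness properties already established. First I would treat the numerical case. The functor $R_\cN \circ \pi \colon \Num(k)_\bbQ \to \NNum(k)_\bbQ$ factors (up to natural isomorphism) through the projection $\Num(k)_\bbQ \to \Num(k)_\bbQ/_{\!\!-\otimes\bbQ(1)}$ followed by the fully-faithful functor $R_\cN$ of Theorem~\ref{thm:embedding}. Since $R_\cN$ is fully-faithful, a morphism $g$ in $\Num(k)_\bbQ/_{\!\!-\otimes\bbQ(1)}$ is an isomorphism if and only if $R_\cN(g)$ is; so it suffices to show that $\pi(f)$ is an isomorphism in the orbit category if and only if $f$ is an isomorphism in $\Num(k)_\bbQ$. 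The nontrivial direction is that $\pi(f)$ invertible forces $f$ invertible. Here I would use that both $\Num(k)_\bbQ$ and the orbit category $\Num(k)_\bbQ/_{\!\!-\otimes\bbQ(1)}$ are abelian semi-simple — the former by Corollary~\ref{cor:Jannsen} (equivalently Jannsen), and the latter because it is obtained from a semi-simple category by a procedure (orbit category followed by idempotent completion, as used implicitly in Theorem~\ref{thm:embedding}) that preserves semi-simplicity. In a semi-simple abelian category a morphism is an isomorphism exactly when it is both a monomorphism and an epimorphism, equivalently when its kernel and cokernel vanish; and an additive functor between semi-simple abelian categories which is faithful detects zero objects and hence detects isomorphisms. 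Concretely: if $\pi(f)\colon \pi(M)\to\pi(N)$ is an isomorphism, then $\mathrm{Hom}(\pi(N'),\mathrm{coker}\,\pi(f))=0$ and $\mathrm{Hom}(\mathrm{ker}\,\pi(f),\pi(M'))=0$ for all objects, so by faithfulness of $\pi$ (which holds since $\bbQ(1)$ acts freely enough — this is the content of the orbit category construction recalled in \S\ref{sub:orbit}, where $\mathrm{Hom}$ in the orbit category contains $\mathrm{Hom}$ of the original category as a direct summand) the kernel and cokernel of $f$ itself are zero, whence $f$ is an isomorphism.

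For the Chow case, $R\circ\pi\colon \Chow(k)_\bbQ \to \NChow(k)_\bbQ$ is the composite $\Chow(k)_\bbQ \xrightarrow{\pi} \Chow(k)_\bbQ/_{\!\!-\otimes\bbQ(1)} \xrightarrow{R} \NChow(k)_\bbQ$, with $R$ fully-faithful by \eqref{diag:1} and $\pi$ the orbit-category projection, whose $\mathrm{Hom}$-groups again contain the original ones as direct summands. Thus $(R\circ\pi)(f)$ is an isomorphism if and only if $\pi(f)$ is, and the remaining point is: if $\pi(f)$ admits a two-sided inverse in the orbit category, does $f$ admit one in $\Chow(k)_\bbQ$? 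Here $\Chow(k)_\bbQ$ is not semi-simple, so the previous argument does not apply directly. Instead I would post-compose with the projection to $\NNum(k)_\bbQ$: by the numerical case already proved and the commutativity of the diagram in Theorem~\ref{thm:embedding}, $(R\circ\pi)(f)$ being an isomorphism implies its image in $\NNum(k)_\bbQ$ is an isomorphism, hence — since that image equals the image of $f$ under $\Chow(k)_\bbQ \to \Num(k)_\bbQ \to \NNum(k)_\bbQ$ and $R_\cN$ is fully-faithful — the image $\bar f$ of $f$ in $\Num(k)_\bbQ$ is an isomorphism. One then invokes the classical fact (Jannsen, or the nilpotence theorem of Kimura/O'Sullivan in the relevant cases, but here it is elementary from the kernel being a nil ideal) that the kernel of $\Chow(k)_\bbQ \to \Num(k)_\bbQ$ on endomorphism rings is a nilpotent ideal, so that a morphism in $\Chow(k)_\bbQ$ whose image in $\Num(k)_\bbQ$ is invertible is itself invertible. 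Conversely, if $f$ is an isomorphism then so is every functorial image, which is trivial.

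The main obstacle I anticipate is not the semi-simplicity input — that is handed to us by Corollary~\ref{cor:Jannsen} and Theorem~\ref{thm:semi-simple} — but rather the careful bookkeeping of the orbit-category projections $\pi$. One must verify that these projections are conservative on the relevant subcategories, i.e. that no information about invertibility is lost when passing from a category to its orbit category under $-\otimes\bbQ(1)$. This follows from the description of the orbit-category $\mathrm{Hom}$-groups as $\bigoplus_{i\in\bbZ}\mathrm{Hom}(M, N\otimes\bbQ(1)^{\otimes i})$, which exhibits $\mathrm{Hom}(M,N)$ as a direct summand (the $i=0$ term), so that a morphism is zero in the orbit category iff it is zero upstairs — and likewise for detecting whether a composite equals an identity. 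Assembling these observations, together with the nilpotence of the Chow-to-numerical kernel and the fully-faithfulness of $R$ and $R_\cN$, yields both equivalences. I would write the numerical case in full and then derive the Chow case from it, rather than arguing the two in parallel.
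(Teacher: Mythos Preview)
Your closing paragraph contains the whole proof, and it is essentially the paper's argument: the projection $\pi$ to the orbit category is conservative because $\pi(f)$ sits in degree~$0$, so if $\underline{g}=\{g_i\}_{i\in\bbZ}$ is a two-sided inverse of $\pi(f)$ in $\cC/_{\!\!-\otimes\cO}$, the degree-$0$ component of the equation $\underline{g}\circ\pi(f)=\id$ reads $g_0\circ f=\id$, and likewise $f\circ g_0=\id$. Combined with the full faithfulness of $R$ and $R_\cN$, this settles both the Chow and the numerical case simultaneously. The paper isolates exactly this as a standalone lemma (``$\pi:\cC\to\cC/_{\!\!-\otimes\cO}$ is conservative for any $\otimes$-invertible $\cO$'') and uses nothing else.

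Your main line of argument, however, takes a detour that contains a genuine gap. In the Chow case you invoke as a ``classical fact'' that the kernel of $\Chow(k)_\bbQ\to\Num(k)_\bbQ$ on endomorphism algebras is a nil ideal, so that invertibility lifts. This is \emph{not} known in general: it would follow from Kimura--O'Sullivan finite-dimensionality or from Voevodsky's smash-nilpotence conjecture, but both remain open for arbitrary Chow motives. So the reduction ``Chow case $\Rightarrow$ numerical case via nilpotence'' does not go through. Even in the numerical case, appealing to Jannsen's semi-simplicity (Corollary~\ref{cor:Jannsen}) to establish conservativity of $\pi$ is heavy machinery for an elementary fact, and it requires extra care since the orbit category $\Num(k)_\bbQ/_{\!\!-\otimes\bbQ(1)}$ need not be idempotent complete, hence need not be abelian, before passing to its envelope. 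Promote the observation in your final paragraph to be the entire argument and discard the semi-simplicity and nilpotence manoeuvres.
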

Informally speaking, Corollary~\ref{cor:applic1} shows that if by hypothesis two non isomorphic motives become isomorphic in the noncommutative world, then the isomorphism in question must be `purely' noncommutative, \ie it {\em cannot} be induced by a correspondence.
\begin{corollary}\label{cor:applic2}
Assume that $k$ is a field of characteristic zero. Then, for every numerical motive $(X,p,m)$ the $\bbQ$-algebra of endomorphisms $\End_{\Num(k)_\bbQ}((X,p,m))$ is finite dimensional.
\end{corollary}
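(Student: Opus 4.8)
The plan is to transport the statement into the noncommutative world by means of Theorem~\ref{thm:embedding} and there to extract the required finiteness from Theorem~\ref{thm:semi-simple}. Fix a numerical motive $N=(X,p,m)$. Since $k$ is a field of characteristic zero we are in case~(ii) of Theorem~\ref{thm:semi-simple} (with $F=\bbQ$), and moreover $K_0(k)=\bbZ$, so Proposition~\ref{prop:ideal1} applies and the intersection number on $\NChow(k)_\bbQ$ is computed by formula~\eqref{eq:intersection1}. First I would exploit that the functor $R_\cN$ of Theorem~\ref{thm:embedding} is fully-faithful and $\bbQ$-linear: together with the standard description of the $\Hom$-groups of an orbit category (see \S\ref{sub:orbit}) this produces, for $M:=R_\cN(\pi(N))$, a natural isomorphism of $\bbQ$-algebras
\[
\End_{\NNum(k)_\bbQ}(M)\;\cong\;\End_{\Num(k)_\bbQ/_{\!\!-\otimes\bbQ(1)}}(\pi(N))\;=\;\bigoplus_{n\in\bbZ}\Hom_{\Num(k)_\bbQ}\big(N,\,N\otimes\bbQ(1)^{\otimes n}\big),
\]
inside which the summand indexed by $n=0$ is precisely $\End_{\Num(k)_\bbQ}(N)$ and is a unital $\bbQ$-subalgebra. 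Hence it suffices to prove that $\End_{\NNum(k)_\bbQ}(M')$ is finite-dimensional over $\bbQ$ for $M'=M$, and in fact for every object $M'$ of $\NNum(k)_\bbQ$.

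Second, I would reduce to the generators. By construction $\NNum(k)_\bbQ$ is the pseudo-abelian envelope of $\NChow(k)_\bbQ/\cN$, so every object is a direct summand of (a finite direct sum of) smooth proper dg categories; since the endomorphism algebra of a direct summand is a corner of the endomorphism algebra of the ambient object, it is enough to bound $\dim_\bbQ\End_{\NNum(k)_\bbQ}(\cA)$ for a single smooth proper dg category $\cA$. By definition this algebra is the quotient of $\End_{\NChow(k)_\bbQ}(\cA)=K_0(\cA^{\op}\otimes_k^{\bbL}\cA)_\bbQ$ by the restriction of the $\otimes$-ideal $\cN$, and by Theorem~\ref{thm:ideal} together with Proposition~\ref{prop:ideal1} this restriction is exactly the radical of the symmetric $\bbQ$-bilinear intersection form $\langle-\cdot-\rangle$, which by~\eqref{eq:intersection1} is computed through the Euler characteristic of the Hochschild homology of $\cA$ with bimodule coefficients.

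Third, I would appeal to Theorem~\ref{thm:semi-simple}: under our hypotheses $\NNum(k)_\bbQ$ is abelian semi-simple, and its proof shows moreover that $\End_{\NNum(k)_\bbQ}(\cA)$ is a finite-dimensional semi-simple $\bbQ$-algebra. Conceptually, the finiteness originates in the fact that, $\cA$ being smooth and proper, the complex $HH(\cA)$ (and more generally $HH(\cA;M)$ for a perfect $\cA$-$\cA$-bimodule $M$) is a perfect complex over $k$; consequently the intersection form factors through the finite-dimensional $k$-algebra of operators induced on $HH(\cA)$, and it is this factorization that collapses the a priori enormous group $K_0(\cA^{\op}\otimes_k^{\bbL}\cA)_\bbQ$ to a finite-dimensional quotient once one passes to $\cN$. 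Combining the three steps yields $\dim_\bbQ\End_{\Num(k)_\bbQ}(N)<\infty$, as claimed.

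The only genuinely delicate point is the third step, and this is why the corollary is presented as a consequence of the semi-simplicity theorem rather than proved by a short self-contained argument: perfectness of $HH(\cA)$ instantly bounds the $k$-dimension of the relevant endomorphism space, but the statement asks for finiteness over $F=\bbQ$, which is a real constraint when $k$ is large over $\bbQ$, and exactly this is what is packaged into the proof of Theorem~\ref{thm:semi-simple}. As a reassuring check, the displayed isomorphism identifies $\End_{\Num(k)_\bbQ}(N)$ with a $\bbQ$-vector space of numerical-equivalence classes of algebraic correspondences on $X\times X$, so the corollary recovers this classical finiteness statement without ever invoking a Weil cohomology.
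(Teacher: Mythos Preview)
Your argument is correct and follows the same strategy as the paper: embed $\End_{\Num(k)_\bbQ}(N)$ as the degree-zero summand of $\End_{\Num(k)_\bbQ/_{-\otimes\bbQ(1)}}(\pi(N))$, identify the latter with $\End_{\NNum(k)_\bbQ}(M)$ via the fully-faithful $R_\cN$ of Theorem~\ref{thm:embedding}, and then invoke the proof of Theorem~\ref{thm:semi-simple} for the finite-dimensionality over~$\bbQ$. The paper's proof is exactly these three sentences; your ``second step'' (reduction to generators $\cA$) is an unnecessary detour, since the Andr{\'e}--Kahn machinery used in the proof of Theorem~\ref{thm:semi-simple} already delivers finite-dimensional Hom-spaces for \emph{every} object of $\NNum(k)_\bbQ$, not only for the generating smooth proper dg categories.
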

Corollary~\ref{cor:applic2} was first proved by Kleiman in \cite{Kleiman} using and appropriate Weil cohomology towards a gentle category of graded vector spaces. In contrast, our proof doesn't make use of any Weil cohomology. It is simply based on the finite-dimensionality of the noncommutative world and on the above `bridge'. 

\section{Differential graded categories}\label{sec:dg}
In this section we collect the notions and results concerning dg categories which are used throughout the article. For further details we invite the reader to consult Keller's ICM adress~\cite{ICM}. Let $k$ be a fixed base commutative ring and $\cC(k)$ the category of (unbounded) cochain complexes of $k$-modules. A {\em differential graded (=dg) category $\cA$} is a category enriched over $\cC(k)$. Concretely, the morphisms sets $\cA(x,y)$ are complexes of $k$-modules and the composition operation fulfills the Leibniz rule: $d(f\circ g)=(df)\circ g+(-1)^{\textrm{deg}(f)}f\circ(dg)$. A {\em dg functor $F:\cA \to \cB$} is simply a functor which preserves the differential graded structure. The category of dg categories will be denoted by $\dgcat(k)$.

\subsection{Dg modules}\label{sub:modules}
Let $\cA$ be a dg category. Its {\em opposite} dg category $\cA^\op$ has the same objects and complexes of morphisms given by $\cA^\op(x,y):=\cA(y,x)$. A {\em right dg $\cA$-module} (or simply a $\cA$-module) is a dg functor $\cA^\op \to \cC_\dg(k)$ with values in the dg category of complexes of $k$-modules. We will denote by $\cC(\cA)$ the category of $\cA$-modules and by $\cD(\cA)$ the {\em derived category} of $\cA$, \ie the localization of $\cC(\cA)$ with respect to the class of quasi-isomorphisms; see \cite[\S3]{ICM}. The full triangulated subcategory of compact objects (see \cite[Def.~4.2.7]{Neeman}) will be denoted by $\cD_c(\cA)$.
\subsection{Morita model structure}\label{sub:Morita}
As proved in \cite{IMRN}, the category $\dgcat(k)$ carries a Quillen model structure whose weak equivalence are the {\em derived Morita equivalences}, \ie those dg functors $F:\cA \to \cB$ which induce an equivalence $\cD(\cA) \stackrel{\sim}{\to} \cD(\cB)$ on the associated derived categories. The homotopy category hence obtained will be denoted by $\Hmo(k)$. 
\subsection{Symmetric monoidal structure}\label{sub:monoidal}
The tensor product of $k$-algebras extends naturally to dg categories, giving rise to a symmetric monoidal structure $-\otimes_k-$ on $\dgcat(k)$. The $\otimes$-unit is the dg category $\underline{k}$ with one object and with $k$ as the dg algebra of endomorphisms (concentrated in degree zero). As explained in \cite[\S4.3]{ICM}, the tensor product of dg categories can be derived giving rise to a symmetric monoidal category $(\Hmo(k), -\otimes_k^\bbL-, \underline{k})$. 
\subsection{Bimodules and Hom-spaces}\label{sub:bimodules}
Let $\cA$ and $\cB$ be two dg categories. A {\em $\cA\text{-}\cB$-bimodule} is a dg functor $\cA\otimes^\bbL_k \cB^\op \to \cC_\dg(k)$, or in other words a $(\cA^\op \otimes_k^\bbL \cB)$-module. Let $\rep(\cA,\cB)$ be the full triangulated subcategory of $\cD(\cA^\op \otimes_k^\bbL \cB)$ spanned by the (cofibrant) $\cA\text{-}\cB$-bimodules $X$ such that for every object $x \in \cA$ the associated $\cB$-module $X(x,-)$ belongs to $\cD_c(\cB)$. As explained in \cite[\S4.2 and \S4.6]{ICM} there is a natural bijection $\Hom_{\Hmo(k)}(\cA,\cB)\simeq \mathrm{Iso}\,\rep(\cA,\cB)$, where $\mathrm{Iso}\, \rep(\cA,\cB)$ stands for the isomorphism classes of objects in $\rep(\cA,\cB)$. Under this identification, the composition operation in $\Hmo(k)$ corresponds to the (derived) tensor product of bimodules.

\subsection{Smooth and proper dg categories}\label{sub:smooth}
In his noncommutative (algebraic) geometry program \cite{IAS,ENS,Miami,finMot}, Kontsevich introduced the following important notions of smoothness and properness.
\begin{definition}[Kontsevich]\label{def:sp}
Let $\cA$ be a dg category. We say that $\cA$ is {\em smooth} if the $\cA\text{-}\cA$-bimodule
\begin{eqnarray}\label{eq:bimodule}
\cA(-,-): \cA \otimes_k^\bbL \cA^\op \to \cC_\dg(k) && (x,y) \mapsto \cA(x,y)
\end{eqnarray}
belongs to $\cD_c(\cA^\op \otimes_k^\bbL \cA)$. We say that $\cA$ is {\em proper} if for each ordered pair of objects $(x,y)$ in $\cA$, the complex $\cA(x,y)$ of $k$-modules belongs to $\cD_c(k)$.
\end{definition}
Let $X$ be a smooth and proper $k$-scheme. Then, the associated dg category $\cD_\perf^\dg(X)$ of perfect complexes of $\cO_X$-modules is smooth and proper in the sense of Definition~\ref{def:sp}; see \cite[Example~4.5]{CT1}. For further examples of smooth and proper dg categories, coming from representation theory of quivers and from deformation by quantization, we invite the reader to consult \cite{IAS}. 

As proved in \cite[Thm.~4.8]{CT1}, the smooth and proper dg categories can be characterized conceptually as being precisely the dualizable (or rigid) objects of the symmetric monoidal category $(\Hmo(k),-\otimes_k^\bbL-,\underline{k})$. As a consequence, we have a natural bijection $\Hom_{\Hmo(k)}(\cA,\cB)\simeq \mathrm{Iso}\, \cD_c(\cA^\op \otimes^\bbL_k \cB)$ whenever $\cA$ and $\cB$ are smooth and proper.

\section{Orbits, quotients, and pseudo-abelian envelopes}
In this section we prove that orbits, quotients, and pseudo-abelian envelopes are three distinct operations which, under very general hypotheses, commute with each other; see Proposition~\ref{prop:commutativity}. This general result, which is of independent interest, will play a key role in the proof of Theorem~\ref{thm:embedding}. In what follows, $F$ will denote a fixed field and $(\cC,\otimes, {\bf 1})$ a $F$-linear, additive and rigid symmetric monoidal category.
\subsection{Orbit categories}\label{sub:orbit}
Let $\cO$ be a $\otimes$-invertible object of $\cC$. Then, as explained in \cite[\S7]{CvsNC}, we can construct the {\em orbit category $\cC\!/_{\!\!-\otimes\cO}$ of $\cC$} as follows: the objects are the same as those of $\cC$ and the morphisms are given by 
\begin{equation*}
\Hom_{\cC\!/_{\!\!-\otimes \cO}}(X,Y):=\bigoplus_{j \in \bbZ} \Hom_\cC(X,Y \otimes \cO^{\otimes j})\,.
\end{equation*}
Composition is naturally induced by $\cC$. The category $\cC\!/_{\!\!-\otimes\cO}$ is $F$-linear, additive, rigid symmetric monoidal, and comes equipped with a natural $F$-linear, additive, faithful, (essentially) surjective, and symmetric monoidal functor $\pi:\cC \to \cC\!/_{\!\!-\otimes\cO}$.
\subsection{Quotient categories}\label{sub:quotient}
As proved in \cite[Lemma~7.1.1]{AK}, the formula
\begin{equation}\label{eq:description}
 \cN(X,Y):= \{ f \in \Hom_\cC(X,Y) \, |\, \forall \, g \in \Hom_\cC(Y,X),\, \mathrm{tr}(g \circ f)=0\}
\end{equation}
defines a $\otimes$-ideal $\cN$ of $\cC$. Here, $\mathrm{tr}(g \circ f)$ denotes the {\em categorical trace} in the rigid symmetric monoidal category $\cC$, \ie the composed morphism
\begin{equation*}
\left({\bf 1} \stackrel{(g\circ f)^\sharp}{\too} X^\vee \otimes X \stackrel{\sim}{\too} X \otimes X^\vee \stackrel{\mathrm{ev}}{\too} {\bf 1}\right) \in \mathsf{End}_\cC({\bf 1})\,,
\end{equation*}
where $(g \circ f)^\sharp$ is the morphism naturally associated to $(g \circ f)$. By taking the quotient of $\cC$ by the $\otimes$-ideal $\cN$ we obtain then a $F$-linear, additive and rigid symmetric monoidal category $\cC/\cN$, as well as a natural $F$-linear, additive, full, (essentially) surjective and symmetric monoidal functor $\cC \to \cC/\cN$.

In the particular case where $\End_\cC({\bf 1}) \simeq F$, the ideal $\cN$ can be characterized as the largest $\otimes$-ideal of $\cC$ (distinct from the entire category); see \cite[Prop.~7.1.4]{AK}.

\subsection{Pseudo-abelian envelope}\label{sub:envelope}
Given a category $(\cC, \otimes, {\bf 1})$ as above, we can construct its pseudo-abelian envelope $\cC^\natural$ as follows: the objects are the pairs $(X,e)$, where $X \in \cC$ and $e$ is an idempotent of the $F$-algebra $\End_\cC(X)$, and the morphisms are given by 
$$ \Hom_{\cC^\natural}((X,e),(Y,e')):=e \circ \Hom_\cC(X,Y) \circ e'\,.$$
Composition is naturally induced by $\cC$. The symmetric monoidal structure on $\cC$ extends naturally to $\cC^\natural$ by the formula $(X,e)\otimes(Y,e'):=(X \otimes Y, e\otimes e')$. We obtain then a $F$-linear, additive, idempotent complete, rigid symmetric monoidal category $\cC^\natural$, as well as a $F$-linear, additive, fully-faithful and symmetric monoidal functor $\cC \to \cC^\natural$, $X \mapsto (X, \id_X)$.

\begin{proposition}\label{prop:commutativity}
Let $(\cD,\otimes, {\bf 1})$ be a $F$-linear, additive, rigid symmetric monoidal category, and $\cO$ a $\otimes$-invertible object of $\cD$. Assume that 
\begin{equation}\label{eq:hyp}
\Hom_\cD({\bf 1}, \cO^{\otimes n}) \simeq  \left\{ \begin{array}{ll} F  &  n=0 \\ 0 & n \neq 0 \,.\end{array} \right.
\end{equation}
Then, there exist $F$-linear, additive and symmetric monoidal functors $\alpha$, $\beta$ and $\gamma$ making the following diagram commutative
\begin{equation*}
\xymatrix{
\cD \ar[d]_\pi \ar[r] & \cD/\cN \ar[d]^\pi \ar[r] & (\cD/\cN)^\natural \ar[d]^\pi \\
\cD\!/_{\!\!-\otimes \cO} \ar[d] \ar[r]^-\alpha & (\cD/\cN)\!/_{\!\!-\otimes \cO} \ar@{=}[d] \ar[r]^-\gamma & (\cD/\cN)^\natural \!/_{\!\!-\otimes \cO} \ar@{=}[d] \\
(\cD\!/_{\!\!-\otimes \cO})/\cN \ar[d] \ar[r]^-\beta_-\simeq & (\cD/\cN)\!/_{\!\!-\otimes \cO} \ar[d] \ar[r]^-\gamma & (\cD/\cN)^\natural\!/_{\!\!-\otimes \cO} \ar[d] \\
((\cD\!/_{\!\!-\otimes \cO})/\cN)^\natural \ar[r]^-{\beta^\natural}_-\simeq & ((\cD/\cN)\!/_{\!\!-\otimes \cO})^\natural \ar[r]^-{\gamma^\natural}_-\simeq & ((\cD/\cN)^\natural\!/_{\!\!-\otimes \cO})^\natural \,.
}
\end{equation*}
\end{proposition}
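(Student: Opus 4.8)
The plan is to construct the three functors $\alpha$, $\beta$, $\gamma$ separately, verify commutativity of each square, and then upgrade $\beta$ and $\gamma$ to equivalences (and, at the bottom row, to equivalences even before passing to envelopes). The key structural observation is that all of orbit, quotient, and pseudo-abelian envelope are defined by explicit formulas on objects and $\Hom$-groups, so every functor in the diagram can be written down by hand on objects and morphisms; the content is that these formulas are compatible. First I would handle the \emph{quotient} direction: given a functor $\phi:\cC\to\cC'$ that kills the ideal $\cN_{\cC}$ (equivalently, $\phi$ sends $\cN_{\cC}$ into $\cN_{\cC'}$), there is an induced functor on quotients, because the trace is preserved by any $F$-linear symmetric monoidal functor between rigid categories (the categorical trace is defined purely in terms of the monoidal/duality data). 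This gives $\alpha$ and $\gamma$ from $\pi_{-\otimes\cO}$ and from $\cD/\cN \to (\cD/\cN)^{\natural}$ respectively, once one checks these functors respect the relevant $\cN$'s; similarly the vertical ``$\pi$'' maps and the three ``$\to$'' maps into envelopes are formal.

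The heart of the matter is the isomorphism $\beta: (\cD\!/_{\!-\otimes\cO})/\cN \isoto (\cD/\cN)\!/_{\!-\otimes\cO}$. Both categories have the same objects as $\cD$, so it suffices to identify the $\Hom$-groups compatibly with composition. On the right-hand side one has $\bigoplus_{j}\Hom_{\cD/\cN}(X,Y\otimes\cO^{\otimes j}) = \bigoplus_j \Hom_{\cD}(X,Y\otimes\cO^{\otimes j})/\cN_{\cD}(X,Y\otimes\cO^{\otimes j})$. On the left-hand side one has $\big(\bigoplus_j \Hom_{\cD}(X,Y\otimes\cO^{\otimes j})\big)\big/\cN_{\cD/_{-\otimes\cO}}(X,Y)$. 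So $\beta$ is an isomorphism precisely when the ideal $\cN_{\cD/_{-\otimes\cO}}$, computed in the orbit category, decomposes as the direct sum $\bigoplus_j \cN_{\cD}(X,Y\otimes\cO^{\otimes j})$ of the ideals computed in $\cD$. This is where hypothesis~\eqref{eq:hyp} enters decisively: a morphism $f=(f_j)_j$ in the orbit category is numerically trivial iff $\mathrm{tr}(g\circ f)=0$ for all $g=(g_i)_i$, and one computes that the trace $\mathrm{tr}_{\cD/_{-\otimes\cO}}(g\circ f)\in\End_{\cD/_{-\otimes\cO}}({\bf 1})=\bigoplus_n \Hom_{\cD}({\bf 1},\cO^{\otimes n})$ has its degree-$0$ component equal to $\sum_j \mathrm{tr}_{\cD}(g_{-j}\circ f_j)$ and, by~\eqref{eq:hyp}, all other components vanish. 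Testing against $g$ supported in a single degree then shows $f$ is numerically trivial in the orbit category iff each $f_j$ is numerically trivial in $\cD$; hence the ideals match up and $\beta$ is an isomorphism of categories. It is clearly $F$-linear, additive and symmetric monoidal by construction.

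I expect the bookkeeping around the trace computation in the orbit category to be the main obstacle — one has to be careful about how the duality $(\,\cdot\,)^{\vee}$, the evaluation, and the grading index interact (the dual of $Y\otimes\cO^{\otimes j}$ involves $\cO^{\otimes -j}$, which is what forces the pairing $f_j$ with $g_{-j}$), and one must confirm that the monoidal structure on the orbit category really does make $\pi:\cD\to\cD/_{-\otimes\cO}$ symmetric monoidal so that the trace identity $\mathrm{tr}(\pi(h))=\pi(\mathrm{tr}(h))$ is available. Once $\beta$ is established, $\beta^{\natural}$ is an equivalence automatically (the pseudo-abelian envelope is a $2$-functor, so it sends equivalences to equivalences), and the same formal argument that produced $\beta$ also produces the bottom isomorphism $\gamma: (\cD/\cN)\!/_{-\otimes\cO} \isoto (\cD/\cN)^{\natural}\!/_{-\otimes\cO}$? — no: $\gamma$ is genuinely the envelope-orbit comparison, and its upgrade to an equivalence on the bottom row, $\gamma^{\natural}$, follows from the general fact that $\big(\cC\!/_{-\otimes\cO}\big)^{\natural}\simeq\big(\cC^{\natural}\big)\!/_{-\otimes\cO}$ for any rigid $\cC$ and $\otimes$-invertible $\cO$ (both describe pairs $((X,e),\ (f_j))$ with the evident composition), which one checks directly by the same explicit-formula method. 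Assembling the squares: the top two squares commute by naturality of $\pi$ and of ``$\to\,(-)^{\natural}$''; the middle-left square commutes because $\beta\circ(\text{quotient of }\pi)$ and $\alpha$ agree on objects and on morphisms (trace back through the formulas); the remaining squares commute for the same reason. Finally, the claimed equivalences $\beta$, $\beta^{\natural}$, $\gamma^{\natural}$ have been verified, completing the proof.
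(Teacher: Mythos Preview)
Your treatment of $\beta$ is correct and in fact slightly more direct than the paper's. You prove both inclusions $\cN_{\cD/_{-\otimes\cO}}(X,Y)=\bigoplus_j\cN_\cD(X,Y\otimes\cO^{\otimes j})$ by computing the degree-$0$ component of the orbit trace and testing against single-degree $g$'s. The paper proves only the inclusion $\cN_{\cD/_{-\otimes\cO}}\subseteq\bigoplus_j\cN_\cD$ this way (so that $\alpha$ factors through the quotient, giving $\beta$ full and essentially surjective); for the reverse inclusion it does not compute traces but instead observes that $\mathrm{Ker}(\alpha)$ is a $\otimes$-ideal and invokes the characterisation of $\cN$ as the \emph{largest} $\otimes$-ideal of $\cD/_{-\otimes\cO}$ (available since hypothesis~\eqref{eq:hyp} forces $\End_{\cD/_{-\otimes\cO}}({\bf 1})\simeq F$), hence $\mathrm{Ker}(\alpha)\subseteq\cN$ and $\beta$ is faithful. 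Your argument avoids this appeal to maximality, at the cost of having to justify the full trace formula $\mathrm{tr}_{\cD/_{-\otimes\cO}}(g\circ f)_0=\sum_j\mathrm{tr}_\cD\big((g_{-j}\otimes\cO^{\otimes j})\circ f_j\big)$ rather than just its specialisation to a single-degree $g$.

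There is, however, a genuine gap in your handling of $\gamma^\natural$. The ``general fact'' you invoke, $\big(\cC/_{-\otimes\cO}\big)^\natural\simeq\big(\cC^\natural\big)/_{-\otimes\cO}$, is \emph{false}. Objects of the left-hand side are pairs $(X,\underline{e})$ with $\underline{e}=(e_j)_j\in\bigoplus_j\Hom_\cC(X,X\otimes\cO^{\otimes j})$ an idempotent in the orbit category, whereas objects of the right-hand side are pairs $(X,e)$ with $e\in\End_\cC(X)$ an idempotent in $\cC$ itself (degree $0$ only). The obvious comparison functor is fully faithful but not essentially surjective: the orbit category of an idempotent-complete category is typically \emph{not} idempotent complete (this is exactly why both sides of $\gamma^\natural$ in the diagram carry an outer $(-)^\natural$). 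Your parenthetical ``both describe pairs $((X,e),(f_j))$'' conflates these two classes of idempotents.

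The fix is immediate and is what the paper does: $\gamma:(\cD/\cN)/_{-\otimes\cO}\to(\cD/\cN)^\natural/_{-\otimes\cO}$ is fully faithful (since $\cD/\cN\hookrightarrow(\cD/\cN)^\natural$ is, and the orbit construction just takes direct sums of $\Hom$-groups), and every object $(X,e)$ of the target is a direct summand of $(X,\id_X)$, which lies in the image of $\gamma$. Any fully faithful functor whose essential image contains a generator-up-to-retracts becomes an equivalence upon applying $(-)^\natural$; this yields $\gamma^\natural$ an equivalence without the false identity.
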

\begin{proof}
The functor $\alpha$ is defined as being the identity map on objects and the following natural surjection 
$$ \bigoplus_{j \in \bbZ} \Hom_{\cD}(X, Y \otimes \cO^{\otimes j}) \twoheadrightarrow \bigoplus_{j \in \bbZ} \Hom_{\cD/\cN}(X, Y \otimes \cO^{\otimes j})$$
on morphisms. The fact that it is $F$-linear, additive and symmetric monoidal follows from the corresponding properties of the natural functor $\cD \to \cD/\cN$.

In order to define the functor $\beta$ we need then to show that $\alpha$ vanishes on the $\otimes$-ideal $\cN$. Let
$$\underline{f}=\{f_j\}_{j \in \bbZ} \in \bigoplus_{j \in \bbZ} \Hom_{\cD}(X, Y \otimes \cO^{\otimes j})$$
be a morphism in the orbit category $\cD\!/_{\!\!-\otimes \cO}$ belonging $\cN(X,Y)$. We need to show that each one of the morphisms $f_j$ in $\cD$ belongs to $\cN(X,Y \otimes \cO^{\otimes j})$. By the definition \eqref{eq:description}
of the $\otimes$-ideal $\cN$, we have $\mathrm{tr}(\underline{g} \circ \underline{f})=0$ for every morphism 
$$\underline{g} =\{g_i\}_{i \in \bbZ} \in \bigoplus_{i \in \bbZ} \Hom_{\cD}(Y, X \otimes \cO^{\otimes i})$$
in the orbit category $\cD\!/_{\!\!-\otimes \cO}$. Therefore, let us consider the family of morphisms $\underline{g^j_h} \in \Hom_{\cD\!/_{\!\!-\otimes \cO}}(Y,X)$, where $h \in \Hom_\cD(Y \otimes \cO^{\otimes j},X)$, $j$ is an integer, and
\begin{equation*}
(g_h^j)_i :=  \left\{ \begin{array}{ll} h \otimes \cO^{\otimes(-j)} &  i=-j \\ 0 & i \neq -j \,.\end{array} \right.
\end{equation*}
Recall from \cite[\S7]{CvsNC} that the (co-)evaluation maps of the symmetric monoidal structure on $\cD\!/_{\!\!-\otimes \cO}$ are the image of those of $\cD$ by the natural functor $\pi:\cD \to \cD\!/_{\!\!-\otimes \cO}$. Hence, a direct inspection of the categorical trace $\mathrm{tr}(\underline{g^j_h} \circ \underline{f}) \in \End_{\cD\!/_{\!\!-\otimes \cO}}({\bf 1}, {\bf 1})$ show us that its zero component is precisely the categorical trace $\mathrm{tr}(h \circ f_j) \in \End_\cD({\bf 1}, {\bf 1})$. By considering an arbitrary morphism $h$ and an arbitrary integer $j$, we conclude then that each one of the morphisms $f_j$ in $\cD$ belongs to $\cN(X,Y \otimes \cO^{\otimes j})$. As a consequence, the functor $\alpha$ factors through $(\cD\!/_{\!\!-\otimes \cO})/\cN$ and thus gives rise to a $F$-linear, additive and symmetric monoidal functor $\beta$ as in the above diagram.

Let us now prove that $\beta$ is an equivalence of categories. Since the functor $\alpha$ is full and (essentially) surjective, so it is the functor $\beta$. It remains then to show that $\beta$ is moreover faithful. In order to show this let us consider the kernel $\mathrm{Ker}(\alpha)$ of the functor $\alpha$. As $\alpha$ is symmetric monoidal, $\mathrm{Ker}(\alpha)$ is a $\otimes$-ideal of $\cD\!/_{\!\!-\otimes \cO}$. Note that the above assumption \eqref{eq:hyp} implies that $\End_{\cD\!/_{\!\!-\otimes \cO}}({\bf 1})\simeq F$. Hence, as explained in \S\ref{sub:quotient}, the ideal $\cN$ is the largest $\otimes$-ideal of $\cD\!/_{\!\!-\otimes \cO}$. As a consequence $\mathrm{Ker}(\alpha)\subseteq \cN$. We obtain then the following commutative diagram
\begin{equation*}
\xymatrix{
\cD\!/_{\!\!-\otimes \cO} \ar[r]^-{\alpha} \ar[d] & (\cD/\cN)\!/_{\!\!-\otimes \cO} \ar@{=}[dd] \\
(\cD\!/_{\!\!-\otimes \cO})/\mathrm{Ker}(\alpha) \ar[d]_\delta \ar[dr]^{\simeq} & \\
(\cD\!/_{\!\!-\otimes\cO})/\cN \ar[r]_-{\beta} & (\cD/\cN)\!/_{\!\!-\otimes \cO}\,,
}
\end{equation*}
where the ``diagonal'' functor is an equivalence of categories since $\alpha$ is (essentially) surjective. Finally, since the induced functor $\delta$ is full and (essentially) surjective, we conclude from the above commutative diagram that $\beta$ is faithful.

Let us now focus on the functor $\gamma$, induced by the natural functor $\cD/\cN \to (\cD/\cN)^\natural$. Clearly, it is $F$-linear, additive and symmetric monoidal. Recall that by construction, every object in $(\cD/\cN)^\natural$ is a direct factor of an object in the image of the natural functor $\cD/\cN \to (\cD/\cN)^\natural$. Hence, this property holds also for $\gamma$. By passing to its pseudo-abelian envelope $\gamma^\natural$ we obtain then an equivalence of categories.
\end{proof}

\section{Proof of Theorem~\ref{thm:ideal}}
By construction, the category $\NChow(k)_F$ is $F$-linear, additive and rigid symmetric monoidal. Hence, following \S\ref{sub:quotient}, the proof will consist of showing that the intersection number \eqref{eq:intersection} agrees with the categorical trace of the correspondence $\underline{Y} \circ \underline{X}$; see Corollary~\ref{cor:intersection}. Note that this equality implies automatically that the intersection number \eqref{eq:intersection} is well-defined, \ie that it does not depend on the choice of the representatives of $\underline{X}$ and $\underline{Y}$. Let us then focus on the computation of the categorical trace $\mathrm{tr}(\underline{Y} \circ \underline{X}) \in K_0(k)_F$. Recall that the correspondence 
\begin{equation}\label{eq:correspondance} 
\underline{Y} \circ \underline{X} = e \circ [ \sum_{i,j} (a_i \!\cdot\! b_j)(X_i \otimes^\bbL_\cB Y_j)] \circ e
\end{equation}
is an endomorphism of the noncommutative Chow motive $(\cA,e)$. By construction of $\NChow(k)_F$, we observe that $\mathrm{tr}(\underline{Y} \circ \underline{X})$ agrees with the categorical trace of the correspondence
\begin{equation}\label{eq:correspondance1} 
\underline{Z}:= [ \sum_{i,j} (a_i \!\cdot\! b_j)(X_i \otimes^\bbL_\cB Y_j)] \in \End_{\NChow(k)_F}((\cA,\id_\cA))\,.
\end{equation}
\begin{proposition}\label{prop:aux}
Let $\cA$ be a smooth and proper dg category in the sense of Kontsevich (see \S\ref{sub:smooth}) and $W$ an $\cA\text{-}\cA$-bimodule which belongs to $\cD_c(\cA^\op \otimes^\bbL \cA)$. Then, the categorical trace of the correspondence $[W] \in \End_{\NChow(k)_F}((\cA,\id_\cA))$ is given~by
\begin{equation*}
\mathrm{tr}([W])=[HH(\cA; W)] \in K_0(k)_F\,,
\end{equation*}
where $[HH(\cA; W)]$ denotes the class in $K_0(k)_F$ of the Hochschild homology complex of $\cA$ with coefficients in the $\cA\text{-}\cA$-bimodule $W$.
\end{proposition}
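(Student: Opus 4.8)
The goal is to identify the categorical trace of $[W]$ in the rigid symmetric monoidal category $\NChow(k)_F$ with the class of the Hochschild homology complex $HH(\cA;W)$. The first step is to recall the explicit description of the rigid structure on $\NChow(k)_F$: since $\cA$ is smooth and proper it is a dualizable object of $(\Hmo(k),-\otimes_k^\bbL-,\underline k)$ by \cite[Thm.~4.8]{CT1}, and its dual is $\cA^\op$, with evaluation and coevaluation maps given (at the level of bimodules) by the diagonal bimodule $\cA(-,-)$ viewed appropriately as an $(\cA^\op\otimes_k^\bbL\cA)$-$\underline k$-bimodule and a $\underline k$-$(\cA^\op\otimes_k^\bbL\cA)$-bimodule. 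I would spell this out carefully, because the whole computation hinges on tracking these two bimodules through the composite that defines $\mathrm{tr}([W])$.

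**Key steps.** Second, I would unwind the definition of the categorical trace
\[
\mathrm{tr}([W]) = \left(\underline k \xrightarrow{\ \mathrm{coev}\ } \cA^\op\otimes_k^\bbL\cA \xrightarrow{\ [W]\otimes\id\ } \cA^\op\otimes_k^\bbL\cA \xrightarrow{\ \mathrm{ev}\ } \underline k\right)
\]
as a correspondence in $\Hom_{\NChow(k)_F}(\underline k,\underline k)=K_0(\underline k^\op\otimes_k^\bbL\underline k)_F=K_0(k)_F$, using that composition in $\NChow(k)_F$ is the derived tensor product of bimodules. Composing the coevaluation bimodule with $[W]$ and then with the evaluation bimodule amounts, after the standard bimodule-yoga identifications (tensoring a bimodule over $\cA$ on both sides against the diagonal, i.e. taking the "derived trace" of $W$), to the complex of $k$-modules
\[
W \otimes^\bbL_{\cA^\op\otimes_k^\bbL\cA} \cA(-,-),
\]
which is precisely (a model for) the Hochschild homology complex $HH(\cA;W)$; this is the cyclic/cotrace description of Hochschild homology, and I would cite the relevant source (e.g. Keller's ICM address or the standard reference for the Hochschild complex of a dg category with coefficients in a bimodule). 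Third, I would check that the class this correspondence represents in $K_0(k)_F$ is indeed $[HH(\cA;W)]$: here one uses that $W\in\cD_c(\cA^\op\otimes_k^\bbL\cA)$ and that $\cA$ is smooth, so the resulting complex lies in $\cD_c(k)$ and hence defines a well-defined class in $K_0(k)_F$.

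**Main obstacle.** The delicate point is the bookkeeping in the second step: one must verify that the particular bimodule produced by composing $\mathrm{ev}\circ([W]\otimes\id)\circ\mathrm{coev}$ in $\NChow(k)_F$ really is the Hochschild complex and not some shifted or twisted variant, and in particular that the symmetry isomorphism $X^\vee\otimes X\xrightarrow{\sim}X\otimes X^\vee$ appearing in the definition of the trace contributes exactly the cyclic identification of the two $\cA$-actions. I expect this to require a careful, explicit chase through the identifications of \S\ref{sub:bimodules} rather than a formal argument, and it is the only genuinely non-formal ingredient; everything else is an application of the rigidity of $(\Hmo(k),-\otimes_k^\bbL-,\underline k)$ together with the definition of $\NChow(k)_F$. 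Once this identification is in place, the statement $\mathrm{tr}([W])=[HH(\cA;W)]$ follows immediately, and, combined with \eqref{eq:correspondance1} and the additivity of the trace, it yields the formula \eqref{eq:intersection} for $\mathrm{tr}(\underline Y\circ\underline X)$, completing the proof of Theorem~\ref{thm:ideal}.
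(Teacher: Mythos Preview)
Your proposal is correct and follows essentially the same approach as the paper: identify the dual of $\cA$ as $\cA^\op$ with (co)evaluation given by the diagonal bimodule, compute the trace as the derived tensor product $W\otimes^\bbL_{\cA^\op\otimes_k^\bbL\cA}\cA(-,-)$, and recognize this as $HH(\cA;W)$. The only organizational difference is that the paper first performs the computation in the symmetric monoidal category $\Hmo(k)^{\mathsf{sp}}$ (where morphisms are isomorphism classes of bimodules rather than $K_0$-classes) and then transports the result to $\NChow(k)_F$ via the natural symmetric monoidal functor, which on $\End(\underline{k})$ is precisely $\mathrm{Iso}\,\cD_c(k)\to K_0(k)_F$; this cleanly separates the bimodule calculation from the passage to $K$-theory, whereas you carry out both at once directly in $\NChow(k)_F$.
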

\begin{proof}
As explained in \S\ref{sub:smooth}, the conditions on $\cA$ and $W$ imply that $\cA$ is a dualizable object of the symmetric monoidal category $(\Hmo(k), -\otimes^\bbL_k-, \underline{k})$ and that the $\cA\text{-}\cA$-bimodule $W$ is an endomorphism of $\cA$. We start then by showing that the categorical trace of $W$ in $\Hmo(k)$ agrees with the isomorphism class in $\cD_c(\underline{k}) \simeq \cD_c(k)$ of the Hochschild homology complex $HH(\cA;W)$. Recall from \cite[\S4]{CT1} that the dual of $\cA$ is its opposite dg category $\cA^\op$ and the evaluation map $\cA\otimes_k^\bbL\cA^\op \stackrel{\mathrm{ev}}{\to} \underline{k}$ is given by the $\cA\text{-}\cA$-bimodule \eqref{eq:bimodule}. Hence, as explained in \S\ref{sub:quotient}, the categorical trace of $W$ corresponds to the isomorphism class in $\cD_c(k)$ of the following composition of bimodules
$$\underline{k} \stackrel{W}{\too} \cA^\op \otimes_k^\bbL\cA \stackrel{\sim}{\too} \cA\otimes_k^\bbL\cA^\op \stackrel{\cA(-,-)}{\too} \underline{k} \,.$$ 
By performing this composition we obtain then the complex  
$$W \otimes^\bbL_{\cA^\op \otimes^\bbL_k \cA}\cA(-,-)\,,$$ 
which as explained in \cite[Prop.~1.1.13]{Loday} is quasi-isomorphic to the Hochschild homology complex $HH(\cA;W)$ of $\cA$ with coefficients in the $\cA\text{-}\cA$-bimodule $W$. This proves our claim. 

Now, let us denote by $\Hmo(k)^{\mathsf{sp}} \subset \Hmo(k)$ the full subcategory of smooth and proper dg categories in the sense of Kontsevich. Note that we have a natural functor
\begin{eqnarray*}
\Hmo(k)^{\mathsf{sp}} \too \NChow(k)_F
\end{eqnarray*}
which maps $\cA$ to $(\cA, \id_\cA)$ and sends a dg functor $F: \cA \to \cB$ to the class in $K_0(\cA^\op \otimes_k^\bbL \cB)_F$ of the $\cA\text{-}\cB$-bimodule $(x,y) \mapsto \cB(y, F(x))$. By construction, this functor is symmetric monoidal and so it preserves the categorical trace. When restricted to the endomorphisms of the $\otimes$-unit objects, it corresponds to the map  
$$ \mathrm{Iso}\, \cD_c(k) \simeq \End_{\Hmo(k)^{\mathsf{sp}}}(\underline{k}) \too \End_{\NChow(k)_F}((\underline{k}, \id_{\underline{k}})) \simeq K_0(k)_F $$
which sends an isomorphism class in $\cD_c(k)$ to the respective class in the $F$-linearized Grothendieck group $K_0(\cD_c(k))_F\simeq K_0(k)_F$. Hence, we conclude that $\mathrm{tr}([W])=[HH(\cA;W)] \in K_0(k)_F$ and so the proof is finished.
\end{proof}
The trace formula for the intersection number given by Kleiman in \cite{Kleiman} admits the following noncommutative analogue.

\begin{corollary}\label{cor:intersection}
The intersection number \eqref{eq:intersection} agrees with the categorical trace of the
correspondence $\underline{Y} \circ \underline{X}$.
\end{corollary}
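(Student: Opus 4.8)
\section*{Proof proposal for Corollary~\ref{cor:intersection}}

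The plan is to chain together three ingredients: the reduction already carried out just before Proposition~\ref{prop:aux}, the additivity and $F$-linearity of the categorical trace, and Proposition~\ref{prop:aux} itself. The discussion preceding Proposition~\ref{prop:aux} has already identified $\mathrm{tr}(\underline{Y}\circ\underline{X})$ --- the categorical trace of $\underline{Y}\circ\underline{X}$ computed on the noncommutative Chow motive $(\cA,e)$ --- with the categorical trace of $\underline{Z}=[\sum_{i,j}(a_i\cdot b_j)(X_i\otimes^\bbL_\cB Y_j)]$ regarded as an endomorphism of $(\cA,\id_\cA)$; this rests only on the pseudo-abelian envelope construction of $\NChow(k)_F$ together with the cyclicity of the trace in a rigid symmetric monoidal category, which lets one absorb the outer idempotents $e$. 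So the task reduces to computing $\mathrm{tr}(\underline{Z})$.

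Since the categorical trace is additive and $F$-linear on the endomorphism module of a fixed object, this gives $\mathrm{tr}(\underline{Z})=\sum_{i,j}a_i\cdot b_j\cdot \mathrm{tr}([X_i\otimes^\bbL_\cB Y_j])$, where each $[X_i\otimes^\bbL_\cB Y_j]$ is viewed in $\End_{\NChow(k)_F}((\cA,\id_\cA))$. To invoke Proposition~\ref{prop:aux} for each summand, I must check that the $\cA\text{-}\cA$-bimodule $X_i\otimes^\bbL_\cB Y_j$ lies in $\cD_c(\cA^\op\otimes^\bbL_k \cA)$. This is where smoothness and properness of $\cA$ and $\cB$ enter: by the last observation of \S\ref{sub:smooth}, for smooth and proper dg categories one has $\Hom_{\Hmo(k)}(\cA,\cB)\simeq \mathrm{Iso}\,\cD_c(\cA^\op\otimes^\bbL_k\cB)$, and the composition in $\Hmo(k)$ --- which is precisely the derived tensor product of bimodules over $\cB$ --- therefore sends $X_i\in\cD_c(\cA^\op\otimes^\bbL_k\cB)$ and $Y_j\in\cD_c(\cB^\op\otimes^\bbL_k\cA)$ to an object of $\cD_c(\cA^\op\otimes^\bbL_k\cA)$. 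Proposition~\ref{prop:aux} then yields $\mathrm{tr}([X_i\otimes^\bbL_\cB Y_j])=[HH(\cA;X_i\otimes^\bbL_\cB Y_j)]\in K_0(k)_F$.

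Assembling the pieces gives $\mathrm{tr}(\underline{Y}\circ\underline{X})=\sum_{i,j}a_i\cdot b_j\cdot [HH(\cA;X_i\otimes^\bbL_\cB Y_j)]$, which is exactly the right-hand side of \eqref{eq:intersection}, so the corollary follows. I expect the argument to be essentially bookkeeping with the categorical trace; the only step that needs a genuine (if short) verification is the compactness of $X_i\otimes^\bbL_\cB Y_j$ as an $\cA\text{-}\cA$-bimodule. A secondary point to emphasize is that, since the intersection number in \eqref{eq:intersection} is defined through chosen representatives of $\underline{X}$ and $\underline{Y}$ while the categorical trace is manifestly independent of such choices, this identity is also what establishes the well-definedness of the intersection number announced after Definition~\ref{def:intersection}.
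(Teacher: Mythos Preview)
Your proof is correct and follows essentially the same route as the paper's own argument: reduce $\mathrm{tr}(\underline{Y}\circ\underline{X})$ to $\mathrm{tr}(\underline{Z})$, use $F$-linearity of the trace to expand, and apply Proposition~\ref{prop:aux} termwise. The paper's proof is terser and does not pause to justify that $X_i\otimes^\bbL_\cB Y_j\in\cD_c(\cA^\op\otimes^\bbL_k\cA)$, so your explicit compactness check and your closing remark on well-definedness are harmless (and arguably welcome) additions rather than a different approach.
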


\begin{proof}
As in any $F$-linear rigid symmetric monoidal category, the categorical trace gives rise to a $F$-linear homomorphism
$$\mathrm{tr}(-): \End_{\NChow(k)_F}((\cA,\id_\cA)) \too \End_{\NChow(k)_F}((\underline{k}, \id_{\underline{k}}))\simeq K_0(k)_F\,.$$
Therefore, the categorical trace of the correspondance \eqref{eq:correspondance1}, which agrees with the categorical trace of \eqref{eq:correspondance}, identifies with 
$$ \sum_{i,j} a_i \!\cdot\! b_j \!\cdot \!\mathrm{tr}([X_i \otimes^\bbL_\cB Y_j])\,.$$
By applying Proposition~\ref{prop:aux}, with $W=X_i \otimes^\bbL_\cB Y_j$, we obtain then the desired equality
$$\mathrm{tr}(\underline{Z}) = \sum_{i,j} a_i \!\cdot \!b_j\! \cdot \! [HH(\cA;X_i \otimes_\cB^\bbL Y_j)] =: \langle \underline{X} \cdot \underline{Y} \rangle\,.$$
\end{proof}
\section{Proof of Proposition~\ref{prop:ideal1}}
When $k$ is a local ring, we have a natural isomorphism $K_0(k)\simeq \bbZ$; see \cite[Chap.~1, \S3]{Rosenberg}. This implies that
$$ \End_{\NChow(k)_F}(\underline{k})=K_0(k)_F \simeq F\,.$$
Therefore, since $\NChow(k)_F$ is $F$-linear, additive and rigid symmetric monoidal, we conclude from \S\ref{sub:quotient} that the ideal $\cN$ can be characterized as the largest $\otimes$-ideal of $\NChow(k)_F$ (distinct from the entire category). The fact that the intersection number \eqref{eq:intersection} corresponds to \eqref{eq:intersection1} follow from the natural identification
\begin{eqnarray*}
K_0(\cD_c(k)) \simeq K_0(k) \stackrel{\sim}{\to} \bbZ && [M] \mapsto \sum_n (-1)^n \mathrm{rk}H^n(M)\,, 
\end{eqnarray*}
where $\mathrm{rk}H^n(M)$ denotes the rank of the $n^{\mathrm{th}}$ cohomology group of the complex of $k$-modules $M$.

\section{Proof of Theorem~\ref{thm:semi-simple}}
The proof will consist of verifying the conditions of Andr{\'e}-Kahn's general result \cite[Thm.~1]{AK-errata}. Our $F$-linear rigid symmetric monoidal category will be the category $\NChow(k)_F$ of noncommutative Chow motives; in \cite{AK-errata} the authors used the letter $K$ instead of $F$. Note that since by hypothesis $k$ is a local ring (or even a field), we have $K_0(k)=\bbZ$. This implies that 
$$\End_{\NChow(k)_F}(\underline{k})=K_0(k)_F\simeq F\,.$$
The proof will consist then on constructing a $F$-linear functor
\begin{equation}\label{eq:searched}
\NChow(k)_F \too \cV
\end{equation}
towards a $L$-linear rigid symmetric monoidal category (where $L$ is a field extension of $K$) which satisfies the following conditions:
\begin{itemize}
\item[(a)] the $L$-linear Hom-spaces in $\cV$ are finite dimensional and 
\item[(b)] the nilpotent endomorphisms in $\cV$ have a trivial categorical trace.
\end{itemize}
Recall from \cite[\S5]{IMRN} the construction of the additive category $\Hmo_0(k)$: the objects are the dg categories, the morphisms from $\cA$ to $\cB$ are given by the Grothendieck group $K_0 \rep(\cA,\cB)$ (see \S\ref{sub:bimodules}), and the composition operation is induced by the (derived) tensor product of bimodules. Moreover, the symmetric monoidal structure on $\Hmo(k)$ extends naturally to $\Hmo_0(k)$. There is a natural symmetric monoidal functor $\Hmo(k) \to \Hmo_0(k)$ that is the identity on objects and which sends an element $X$ of $\rep(\cA,\cB)$ to the corresponding class $[X]$ in the Grothendieck group $K_0\rep(\cA,\cB)$. As consequence, the smooth and proper dg categories in the sense of Kontsevich (see \S\ref{sub:smooth}) are still dualizable objects in the symmetric monoidal category $\Hmo_0(k)$; see \S\ref{sub:smooth}. Let us denote by $\Hmo_0(k)^{\mathsf{sp}}\subset \Hmo_0(k)$ the full subcategory of smooth and proper dg categories. Note that $\NChow(k)_F$ is obtained from $\Hmo_0(k)^{\mathsf{sp}}$ by first tensoring each abelian group of morphisms with the field $F$ and then passing to the associated pseudo-abelian envelope. Schematically, we have the following composition
$$ \Hmo_0(k)^{\mathsf{sp}} \stackrel{(-)_F}{\too} \Hmo_0(k)_F^{\mathsf{sp}} \stackrel{(-)^\natural}{\too} \NChow(k)_F\,.$$
Therefore, in order to construct a symmetric monoidal $F$-linear functor as above \eqref{eq:searched}, it suffices then to construct a symmetric monoidal functor
\begin{equation}\label{eq:functor2}
\Hmo_0(k)^{\mathsf{sp}} \too \cV
\end{equation}
towards an idempotent complete $L$-linear category $\cV$. Recall from \cite[\S6.1]{IMRN} that Hochschild homology ($HH$) gives rise to a symmetric monoidal functor $\overline{HH}: \Hmo_0(k) \to \cD(k)$. Since $\overline{HH}$ is symmetric monoidal, it maps dualizable objects to dualizable objects, and so it restricts to a symmetric monoidal functor
\begin{equation}\label{eq:HH}
\overline{HH}: \Hmo_0(k)^{\mathsf{sp}} \too \cD_c(k)\,.
\end{equation}
Now, let us assume that condition (i) holds. Since $F$ is a $k$-algebra we have a ring homomorphism $k \to F$ and so we can consider the associated (derived) extension of scalars functor
\begin{equation}\label{eq:extension}
\cD_c(k) \too \cD_c(F)\,.
\end{equation} 
Note that this functor is symmetric monoidal and that $\cD_c(F)$ is an idempotent complete $F$-linear category. The category $\cD_c(F)$ is in fact naturally equivalent to the category of $\bbZ$-graded $F$-vector spaces. Hence, we can take for \eqref{eq:functor2} the composition of the functors \eqref{eq:HH} and \eqref{eq:extension}, where $L=F$. The fact that $\cD_c(F)$ is rigid symmetric monoidal and that it satisfies the above conditions (a) and (b) is clear.

Now, let us assume that condition (ii) holds. Note that the category $\cD_c(k)$ is $k$-linear. Since $k$ is a $F$-algebra, the category $\cD_c(k)$ is moreover $F$-linear. Hence, since $\cD_c(k)$ is idempotent complete, we can take for \eqref{eq:functor2} the functor \eqref{eq:HH}, where now $L=K$. The fact that $\cD_c(k)$ rigid symmetric monoidal and that it satisfies the above conditions (a) and (b) is clear.

\section{Proof of Corollary~\ref{cor:Jannsen}}
Since the category $\Num(k)_\bbQ$ of numerical motives identifies with the pseudo-abelian envelope of the quotient category $\Chow(k)_\bbQ/\cN$ (see \cite[Examples 6.3.1 and 7.1.2]{AK}), the proof will consist (as the proof of Theorem~\ref{thm:semi-simple}) on verifying the conditions of Andr{\'e}-Kahn's general result \cite[Thm.~1]{AK-errata}. Our $\bbQ$-linear rigid symmetric monoidal category is the category $\Chow(k)_\bbQ$ of Chow motives and our $\bbQ$-linear symmetric monoidal functor is the functor \eqref{eq:composed}. Since by Theorem~\ref{thm:semi-simple} the category $\NNum(k)_\bbQ$ is abelian semi-simple, nilpotent endomorphisms in $\NNum(k)_\bbQ$ have a trivial categorical trace. Moreover, as explained in the proof of Theorem~\ref{thm:semi-simple}, the $\bbQ$-linear Hom-spaces of $\NNum(k)_\bbQ$ are finite dimensional. Hence, all the conditions are satisfied and so the proof is finished.

\section{Proof of Theorem~\ref{thm:embedding}}
Recall from diagram \eqref{diag:1} that the functor $R$ is symmetric monoidal. Hence, it maps the $\otimes$-ideal $\cN$ of $\Chow(k)_\bbQ\!/_{\!\!-\otimes \bbQ(1)}$ to the $\otimes$-ideal $\cN$ of $\NChow(k)_\bbQ$, thus giving rise to the following commutative diagram
\begin{equation}\label{eq:aux1}
\xymatrix{
\Chow(k)_\bbQ\!/_{\!\!-\otimes \bbQ(1)} \ar[d] \ar[rr]^-R && \NChow(k)_\bbQ \ar[d] \\
(\Chow(k)_\bbQ\!/_{\!\!-\otimes \bbQ(1)})/\cN \ar[rr]_-{R/\cN} \ar[d] && \NChow(k)_\bbQ/\cN \ar[d] \\
\left( (\Chow(k)_\bbQ\!/_{\!\!-\otimes \bbQ(1)})/\cN \right)^\natural \ar[rr]_-{(R/\cN)^\natural} && (\NChow(k)_\bbQ/\cN)^\natural = \NNum(k)_\bbQ\,.
}
\end{equation}
Now, recall that we have the following computation in the category of Chow motives
\begin{equation}\label{eq:computation}
\Hom_{\Chow(k)_\bbQ}(\mathrm{Spec}(k),\bbQ(n)) \simeq  \left\{ \begin{array}{ll} \bbQ  &  n=0 \\ 0 & n \neq 0 \,.\end{array} \right.
\end{equation}
By the construction of the orbit category we obtain then the natural isomorphism
$$\End_{\Chow(k)_\bbQ\!/_{\!\!-\otimes \bbQ(1)}}(\mathrm{Spec}(k))\simeq \bbQ\,.$$
Hence, if in Proposition~\ref{prop:commutativity} we take $\cD=\Chow(k)_\bbQ$ and $\cO=\bbQ(1)$, then all the conditions are satisfied. As a consequence, we obtain the commutative diagram
\begin{equation*}
\xymatrix@C=1.5em@R=2.5em{
\Chow(k)_\bbQ \ar[d]_\pi \ar[r] & \Chow(k)_\bbQ / \cN \ar[d]^\pi \ar[r] & (\Chow(k)_\bbQ / \cN)^\natural \simeq \Num(k)_\bbQ \ar[d]^\pi \\
\Chow(k)_\bbQ\!/_{\!\!-\otimes \bbQ(1)} \ar[d] \ar[r]^-\alpha & (\Chow(k)_\bbQ/\cN)\!/_{\!\!-\otimes \bbQ(1)} \ar@{=}[d] \ar[r]^-\gamma & \Num(k)_\bbQ \!/_{\!\!-\otimes \bbQ(1)} \ar@{=}[d] \\
(\Chow(k)_\bbQ\!/_{\!\!-\otimes \bbQ(1)})/\cN \ar[d] \ar[r]^-\beta_-\simeq & (\Chow(k)_\bbQ/\cN)\!/_{\!\!-\otimes \bbQ(1)} \ar[d] \ar[r]^-\gamma & \Num(k)_\bbQ\!/_{\!\!-\otimes \bbQ(1)} \ar[d]  \\
((\Chow(k)_\bbQ\!/_{\!\!-\otimes \bbQ(1)})/\cN)^\natural \ar[r]^-{\beta^\natural}_-\simeq & ((\Chow(k)_\bbQ/\cN)\!/_{\!\!-\otimes \bbQ(1)})^\natural \ar[r]^-{\gamma^\natural}_-\simeq & (\Num(k)_\bbQ \!/_{\!\!-\otimes \bbQ(1)})^\natural \,,
}
\end{equation*}
where the natural equivalence $(\Chow(k)_\bbQ/\cN)^\natural \simeq \Num(k)_\bbQ$ is explained in \cite[Examples 6.3.1 and 7.1.2]{AK}. By choosing inverses $(\beta^\natural)^{-1}$ and $(\gamma^\natural)^{-1}$ to the equivalences $\beta^\natural$ and $\gamma^\natural$, we obtain then a composed functor 
$$\Gamma: \Num(k)_\bbQ\!/_{\!\!-\otimes \bbQ(1)} \to (\Num(k)_\bbQ\!/_{\!\!-\otimes \bbQ(1)})^\natural \stackrel{(\beta^\natural)^{-1} \circ (\gamma^\natural)^{-1}}{\too} ((\Chow(k)_\bbQ\!/_{\!\!-\otimes \bbQ(1)})/\cN)^\natural\,.$$ 
Making use of $\Gamma$ we then construct the following diagram
\begin{equation}\label{eq:aux3}
\xymatrix{
\Chow(k)_\bbQ\!/_{\!\!-\otimes \bbQ(1)} \ar[d]_-{(\gamma \circ \alpha)} \ar[rr]^-R && \NChow(k)_\bbQ \ar[d] \\
\Num(k)_\bbQ\!/_{\!\!-\otimes \bbQ(1)} \ar[d] \ar[d]_{\Gamma}&& \NChow(k)_\bbQ/\cN  \ar[d] \\
((\Chow(k)_\bbQ\!/_{\!\!-\otimes \bbQ(1)})/\cN)^\natural \ar[rr]_-{(R/\cN)^\natural} &&(\NChow(k)_\bbQ/\cN)^\natural=\NNum(k)_\bbQ\,.
}
\end{equation}
Now, note that the commutativity of the above large diagram implies that the left vertical columns of diagrams \eqref{eq:aux3} and \eqref{eq:aux1} are naturally isomorphic. Since the diagram \eqref{eq:aux1} is commutative, we conclude that the above diagram \eqref{eq:aux3} is also commutative (up to natural isomorphism). We define then the functor $R_{\cN}$ to be the composition $(R/\cN)^\natural \circ \Gamma$. Finally, the functor $R_{\cN}$ is fully-faithful, $\bbQ$-linear, additive and symmetric monoidal since each one of the functors used in its construction has these properties.
\section{Proof of Corollary~\ref{cor:applic1}}
Clearly, if $f$ is an isomorphism then $(R\circ \pi)(f)$ and $(R_{\cN}\circ \pi)(f)$ are also isomorphisms. By Theorem~\ref{thm:embedding} the functors $R$ and $R_{\cN}$ are fully-faithful. Hence, it suffices to prove that if by hypothesis $\pi(f)$ is an isomorphism then $f$ is also an isomorphism. This follows from the following general lemma, which is of independent interest.

\begin{lemma}
Let $\cC$ be a symmetric monoidal category and $\cO$ a $\otimes$-invertible object as in \S\ref{sub:orbit}. Then, the functor $\pi: \cC \to \cC\!/_{\!\!-\otimes \cO}$ is conservative.
\end{lemma}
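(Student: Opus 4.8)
The plan is to show directly that $\pi$ reflects isomorphisms, by extracting an honest inverse in $\cC$ from the $\bbZ$-indexed family that inverts $\pi(f)$ in the orbit category; that $\pi$ preserves isomorphisms is automatic for any functor. Concretely, suppose $f\colon X\to Y$ is a morphism of $\cC$ for which $\pi(f)$ is invertible in $\cC\!/_{\!\!-\otimes\cO}$, and fix a two-sided inverse
$$\underline g=\{g_j\}_{j\in\bbZ}\in\bigoplus_{j\in\bbZ}\Hom_\cC\bigl(Y,X\otimes\cO^{\otimes j}\bigr)$$
of $\pi(f)$. I will argue that the single component $g_0\colon Y\to X\otimes\cO^{\otimes 0}=X$ is already a two-sided inverse of $f$ in $\cC$, which immediately gives the lemma.

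To prove this I will unwind the composition law in the orbit category recalled in \S\ref{sub:orbit} (see also \cite[\S7]{CvsNC}); two elementary facts carry the argument. First, under $\pi$ the morphism $f$ — and likewise $\id_X$ and $\id_Y$ — becomes a family concentrated in degree zero, with component $f$ (resp.\ $\id_X$, $\id_Y$) in degree $0$. Second, the degree-$n$ component of a composite $\underline b\circ\underline a$ in $\cC\!/_{\!\!-\otimes\cO}$ is the finite sum $\sum_{i+j=n}(b_j\otimes\cO^{\otimes i})\circ a_i$ of honest composites in $\cC$. Feeding $\pi(f)$ into such a composite, all summands but the one in which $\pi(f)$ contributes in degree $0$ vanish; comparing degree-zero components on the two sides of $\underline g\circ\pi(f)=\id_X$ and of $\pi(f)\circ\underline g=\id_Y$ then produces, after the harmless identifications coming from $\cO^{\otimes 0}={\bf 1}$ and the unit constraints of $\cC$, the relations $g_0\circ f=\id_X$ and $f\circ g_0=\id_Y$ in $\cC$. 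Thus $f$ is an isomorphism, and $\pi$ is conservative.

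I do not anticipate a genuine obstacle: this is in essence a bookkeeping statement, and the only delicate point will be keeping track of the grading together with the coherence (unit) isomorphisms, so as to be certain that the degree-zero part of $\underline g\circ\pi(f)$ is literally $g_0\circ f$ rather than a twisted variant. That verification is immediate from the definition of the orbit category and of its monoidal unit, so the proof should be short.
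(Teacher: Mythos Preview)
Your proposal is correct and follows essentially the same approach as the paper: both argue that if $\underline g=\{g_j\}$ inverts $\pi(f)$ in the orbit category, then because $\pi(f)$ is concentrated in degree zero the degree-zero components of $\underline g\circ\pi(f)$ and $\pi(f)\circ\underline g$ yield $g_0\circ f=\id_X$ and $f\circ g_0=\id_Y$ in $\cC$. Your version is a bit more explicit about the composition law and the unit coherence, but the idea is identical.
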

\begin{proof}
Let $f:X \to Y$ be a morphism in $\cC$. We need to show that if by hypothesis $\pi(f):X \to Y$ is an isomorphism in the orbit category $\cC\!/_{\!\!-\otimes\cO}$, then $f$ is an isomorphism in $\cC$. Let 
$$\underline{g} =\{g_i\}_{i \in \bbZ} \in \bigoplus_{i \in \bbZ} \Hom_{\cD}(Y, X \otimes \cO^{\otimes i})$$
be the inverse of $\pi(f)$ in $\cC\!/_{\!\!-\otimes \cO}$. Since $\pi(f)_i=0$ for $i \neq 0$, a direct inspection of the compositions $\underline{g} \circ \pi(f)$ and $\pi(f) \circ \underline{g}$ show us that $g_0 \circ f =f \circ g_0 = \id_X$. As a consequence, $g_0:Y \to X$ is the inverse of $f$ in $\cC$ and so the proof is finished.
\end{proof}
\section{Proof of Corollary~\ref{cor:applic2}}
By construction of the orbit category, we have an injective homomorphism
$$ \End_{\Num(k)_\bbQ}((X,p,m)) \too \End_{\Num(k)_\bbQ\!/_{\!\!-\otimes \bbQ(1)}}(\pi(X,p,m))$$
of $\bbQ$-algebras. Note that Theorem~\ref{thm:embedding} furnish us a $\bbQ$-algebra isomorphism
$$ \End_{\Num(k)_\bbQ\!/_{\!\!-\otimes \bbQ(1)}}(\pi(X,p,m)) \stackrel{\sim}{\too} \End_{\NNum(k)_\bbQ}((R_{\cN} \circ \pi)(X,p,m))$$
and that the proof of Theorem~\ref{thm:semi-simple} guaranties that the $\bbQ$-algebra on the right-hand side is finite dimensional. Hence, by combining the above arguments we conclude that  $\End_{\Num(k)_\bbQ}((X,p,m))$ is finite dimensional.

\medbreak

\noindent\textbf{Acknowledgments:} The authors are very grateful to Aravind Asok for useful discussions concerning the counting number (in the commutative world) and to Alexander Beilinson, Vladimir Drinfeld and Maxim Kontsevich for stimulating questions and conversations. They would also like to thank the Departments of Mathematics of Caltech and MIT for their hospitality and excellent working conditions.

\end{document}